\newtheorem{theorem*}{Theorem}[section]
\newtheorem{theorem}{Theorem}[section]
\newtheorem{lem}[theorem]{Lemma}
\newtheorem{cor}[theorem]{Corollary}
\theoremstyle{definition}
\newtheorem{example}[theorem]{Example}
\theoremstyle{remark}
\newtheorem{remark}[theorem]{Remark}
\numberwithin{equation}{section}
\newcommand{\C}{\mathbb{C}}
\begin{document}

\title{OMITTED VALUES AND DYNAMICS OF MEROMORPHIC FUNCTIONS}

\author{Tarakanta Nayak and Jian-Hua Zheng}
 \address{Department of Mathematical Sciences,\\
   Tsinghua University, Beijing-100084,\\
   People's Republic of China.}
       \email{jzheng@math.tsinghua.edu.cn}
\email{tnayak@iitbbs.ac.in}
\thanks{The first author is supported by China Postdoctoral
Science Foundation (Grant no. 20090450420) and is now on leave from
National Institute of Technology, Rourkela (India).\\
  The second
author is partially supported by National Natural Foundation of
China (Grant No. 10871108).}


\subjclass[2000]{37F50, 37F10(primary)}

\date{November 8, 2012.}


\keywords{Herman ring, omitted value, meromorphic function}

\begin{abstract}
Let $M$ be the class of all transcendental meromorphic functions
$f~:~ \mathbb{C} \to \mathbb{C} \bigcup\{\infty\}~$ with at least
two poles or one pole that is not an omitted value, and $M_o =\{f
\in M~:~f~\mbox{has at least one omitted value}\}$. Some dynamical
issues of the functions in $M_o$ are addressed in this article. A
complete classification in terms of forward orbits of all the multiply connected Fatou
components is made. As a corollary, it follows that the Julia set is
not totally disconnected unless all the omitted values are
  contained in a single Fatou component. Non-existence of both Baker
wandering domains and invariant Herman rings are proved.
  Eventual connectivity of each wandering domain is proved to exist.
For functions with exactly one pole,
 we show that Herman rings of period two also do not exist.
  A necessary and sufficient condition
 for the existence of a dense subset of singleton buried components in the Julia set
  is established for functions with two omitted values. The conjecture that a
 meromorphic function has at most two completely invariant Fatou
 components is confirmed for all $f \in M_o$ except in the case when $f$ has a single omitted value,
 no critical value and is of infinite order. Some relevant examples are discussed.
\end{abstract}

\maketitle
%

\section{Introduction}
\label{intro}

Let $f: \mathbb{C} \rightarrow \widehat{\mathbb{C}}=\mathbb{C} \cup
\{ \infty \}$ be a transcendental meromorphic function. The set of
points $z \in \widehat{\mathbb{C}}$ in a neighborhood of which the
sequence of iterates $\{f^n(z)\}_{n=0}^{\infty}$ is defined and
forms a normal family is called the Fatou set of $f$ and is denoted
by $\mathcal{F}(f)$. The Julia set, denoted by $\mathcal{J}(f)$, is
the complement of $\mathcal{F}(f)$ in $\widehat{\mathbb{C}}$. It is
well-known that the Fatou set is open and the Julia set is a perfect
set. A component of $\mathcal{F}(f)$, to be called as a
\textit{Fatou component}, is mapped into a component of
$\mathcal{F}(f)$. For a Fatou component $U$, $U_k$ denotes the Fatou
component containing $f^k(U)$ where we take $U_0=U$ by convention. A
Fatou component $U$ is called $p$-periodic if $p$ is the least
natural number satisfying $U_p = U$.
 We say $U$ is invariant if $p=1$. An invariant component $U$ is called completely invariant
 if $f^{-1}(U) =U$. If $U$ is not periodic but $U_n$ is periodic for
some natural number $n$, then $U$ is called pre-periodic. A Fatou
component is called wandering if it is neither periodic nor
pre-periodic. A periodic Fatou component of a meromorphic function
is one of the five types, namely an attracting domain, parabolic
domain, Herman ring, Siegel disk or Baker domain. A Siegel disk or a
Herman ring is not completely invariant by definition. A detailed
description can be found in \cite{berg93}. Each limit function of
$\{f^n\}_{n>0}$ on $U$ is a constant if $U$ is an attracting or a
parabolic domain whereas it is a nonconstant function if $U$ is a
Herman ring or a Siegel disk. On a Baker domain, each limit function
of $\{f^n\}_{n>0}$ is either infinity or one of its pre-images. The
sequence $\{f^n\}_{n>0}$ can have infinitely many subsequential
limits (which are all constants in $\widehat{\mathbb{C}}$) on a
wandering domain. The connectivity of a periodic Fatou component is
known to be $1$, $2$ or $\infty$. An invariant Fatou component is
doubly connected if and only if it is a Herman ring. It is not known
whether a doubly connected periodic Fatou component of period
greater than $1$ is always a Herman ring \cite{bolsch}. A
pre-periodic Fatou component can have any finite connectivity
\cite{bk3}.
\par
 Let $M$ be the
class of transcendental meromorphic functions $f~:~\mathbb{C} \to
\widehat{\mathbb{C}}$ such that $f$ has either at least two poles or
exactly one pole that is not an omitted
 value.  These functions are usually referred to in the literature as
\textit{general meromorphic functions}. The backward orbit of
$\infty$ is an infinite dense subset of the Julia set in this case.
Let $O_f$ denote the set of all omitted values of $f$. Then $O_f$
has at most two elements and for each $w \in O_f$, there is no
ordinary point lying over $w$ and each singularity of $f^{-1}$ lying
over $w$ is necessarily transcendental (in fact direct). The
definition and classification of singularities of
 inverse function of a transcendental meromorphic function can be found in \cite{bergere95}.
 \\
 Let $$M_o =\{f \in M~:~O_f \neq
\emptyset\} ~\mbox{and}~M_o^{k} =\{f \in M~:~O_f~\mbox{has}~
k~\mbox{elements} \}~\mbox{for}~k \in \{1,~2\}. $$ For $f \in
M_o^2$, let $O_f =\{a,~b\}$. Then for a mobius map $T$ with $T(a)=1$
and $T(b)=-1$, $T(f) \in M_o^2$ and $O_{T(f)}=\{1,~-1\}$. Now,
$\frac{T(f)+1}{T(f)-1}$ is a transcendental entire function omitting
$0$ and can be written as $e^{2g}$ for some entire function $g$.
Thus, $T(f)=\frac{1}{\tanh(g)}$ and $f(z)=S(\tanh(g(z)))$ for the
mobius map $S(z)=T^{-1}( \frac{1}{z})$. Note that $S(1)=T^{-1}(1)=a
\neq \infty$ and $S(-1)=T^{-1}(-1)=b \neq \infty$ which gives that
$S^{-1}(\infty) \neq \pm 1$. A point $z$ is a pole of $f$ iff
$\tanh(g(z)) =S^{-1}(\infty)$. Since $f$ is a meromorphic function
with two distinct finite omitted values,  Picard's theorem implies
that $f$ has infinitely many poles whenever $f \in M_o^{2}$.
Similarly, a function in $ M^{1}_o$ can be written as
$\frac{1}{h(z)}+ a$ where $a$ is the omitted value of $f$ and
$h(z)=\frac{1}{f(z)-a}$ is an entire function. In this case, $f$ can
have finitely many poles.
\par
Singular values are well-known to be related in an important way to the
dynamics of a meromorphic function. For a transcendental meromorphic
function, omitted values are a special kind of singular value. The
significance of these values to some dynamical issues is
investigated in this article. It is shown that the local dynamics
of a transcendental meromorphic function at its omitted values
determine the topology of its Fatou set and hence of its Julia set
in considerable detail. This in turn leads to several useful
 conclusions. A multiply connected Fatou component of a
transcendental entire function is known to be a Baker wandering
domain \cite{b5}. A transcendental meromorphic function with exactly
one pole that is an omitted value has at most one multiply connected
Fatou component and this must be doubly connected \cite{b4}.
Multiply connected wandering domains for functions in $M_F=\{f \in
M~:~f ~\mbox{has at most finitely many poles}\}$ were discussed in
Zheng \cite{Zheng} and Rippon et al. \cite{rippon-stallard-5}.
However, multiply connected Fatou components of general meromorphic
functions are not restricted in general and this is a characteristic
departure from all the earlier cases.
\par
Our earlier discussion shows that $M_o \bigcap M_F \neq \emptyset$
and $M_o \setminus M_F \neq \emptyset$. This article examines
multiply connected Fatou components for functions in the class $M_o$
and establishes a complete classification in terms of the behavior
of their forward orbits. It is shown that such a Fatou component is
either wandering or lands  only on some special types of components,
namely a Herman ring, infinitely connected
 Baker domain or a Fatou component containing all the omitted values of the function.
As a corollary, it follows that the Julia set of $f$ is not totally disconnected unless all the
omitted values are contained in a single Fatou component. Some additional results on the dynamics
 of functions in $M_o$ are also presented, the proofs of which partly depend on the key ideas
 of the proofs of the earlier results.
 Invariant Herman rings and Baker wandering
domains are shown to be non-existent for all functions in $M_o$.
 Eventual connectivity of each wandering domain is determined.
 For functions with exactly one pole, we show that Herman rings of period two also do not exist.
 A necessary and sufficient condition for the existence of a dense
subset of singleton buried components in the Julia set is proved for
$f \in M_o^2$. The conjecture that a meromorphic function has at
most two completely invariant Fatou components is confirmed for all
$f \in M_o$ except the case when $f$ has a single omitted value, no
critical value and is of infinite order.  Statements of all results
with brief motivation and  implications are given in Section~2.
Section~3 contains the proofs  of Theorems~(1)-(5). The rest of the
results are proved in Section~4. Some examples are discussed at
relevant places.
\par
 For a closed curve $\gamma$ in $\mathbb{C}$, let $B(\gamma)$ denote
the union of all the bounded components of $\widehat{\mathbb{C}}
\setminus \gamma$. For a domain $D$ in $\widehat{\mathbb{C}}$, we
denote its boundary and connectivity by $\partial D$ and $c(D)$
respectively. By a component of the Julia set we mean a maximally
connected subset of the Julia set. We denote the component of the
Julia set containing a point $z \in \widehat{\mathbb{C}}$ (or a set
$A$) by $\mathcal{J}_z$ (or $\mathcal{J}_A$). For a set $A$, let
$|A|$ denote the number of elements in $A$, $A^c$ its complement in
$\widehat{\mathbb{C}}$ and $\overline{A}$ its closure in
$\widehat{\mathbb{C}}$. A Fatou component $U$ is said to land on a
Fatou component $V$ if $U_n =V$ for some $n$. Unless specifically
mentioned, by saying `for all $n$', we mean `for all $n \in
\mathbb{N} \bigcup \{0\}$' throughout this article.

\section{Results}
A classification of all multiply connected Fatou components of $f
\in M_o$ is made in the sense that each such component is wandering
or lands on a Fatou component $U$ where $U$ contains all the omitted
values of $f$,
 or $U$ is either a Herman ring or an infinitely connected
Baker domain of period greater than $1$.
 Precise situations leading
to these possibilities are the content of
Theorems~(\ref{sc1})-(\ref{singleton-buried}).
\par We say a Fatou
component $V$ is  SCH if one of the following holds.
\begin{enumerate}
 \item $V$ is simply connected.
\item $V$ is multiply connected with $c(V_n)>1$ for all $n \in \mathbb{N}$ and
 $V_{\bar{n}}$ is a Herman ring for some $\bar{n} \in
\mathbb{N} \bigcup \{0\}$.
\end{enumerate}
Clearly, $U$ is SCH implies $U_1$ is SCH whenever
$c(U)>1$.
\begin{theorem}
\label{sc1}
 Let $f \in M_o$ and $\mathcal{J}(f) \bigcap O_f \neq \emptyset$. If
$f \in M_o^2$ or $f \in M_o^1$ with $ |\mathcal{J}_{O_f}|>1$, then
each Fatou component of $f$ is SCH.
\end{theorem}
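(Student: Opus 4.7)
The plan is to fix an arbitrary Fatou component $V$ of $f$, dispose of the simply connected case trivially, and for multiply connected $V$ prove in turn (i) that $c(V_n) > 1$ for every $n \in \mathbb{N}$ and (ii) that some forward iterate $V_{\bar n}$ is a Herman ring. Throughout, fix an omitted value $a \in O_f \cap \mathcal{J}(f)$ provided by hypothesis, noting $a \in \mathbb{C}$ since $f$ has poles.

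For (i), pick a Jordan curve $\gamma \subset V$ that is non-contractible in $V$ (available because $V$ is multiply connected) and, by a small generic perturbation, arrange that $f(\gamma)$ is a simple closed curve in $V_1 \subset \mathbb{C}$ (recall $\infty \in \mathcal{J}(f)$). Suppose for contradiction that $f(\gamma)$ is contractible in $V_1$; then $f(\gamma)$ bounds a Jordan domain $D$ with $\overline{D} \subset V_1 \subset \mathcal{F}(f)$. Proceed by a dichotomy on whether $B(\gamma)$ contains a pole of $f$. If not, then $f$ is holomorphic on the compact set $\overline{B(\gamma)}$, so $f(B(\gamma))$ is open, connected and bounded with $\partial f(B(\gamma)) \subset f(\gamma)$, forcing $f(B(\gamma)) \subset D \subset \mathcal{F}(f)$, hence $\overline{B(\gamma)} \subset \mathcal{F}(f)$; by connectivity, $B(\gamma) \cup \gamma \subset V$, contradicting the non-contractibility of $\gamma$. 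If instead $B(\gamma)$ contains a pole, the argument principle applied to $f-w$ for $w \in \widehat{\mathbb{C}} \setminus \overline{D}$ shows $f(B(\gamma)) \supset \widehat{\mathbb{C}} \setminus \overline{D}$; since $a$ is omitted and finite, $a \not\in f(B(\gamma))$, so $a \in \overline{D} \subset V_1 \subset \mathcal{F}(f)$, contradicting $a \in \mathcal{J}(f)$. Thus $f(\gamma)$ is non-contractible in $V_1$; iterating the same argument with $(V_n,f^n(\gamma))$ in place of $(V,\gamma)$ shows that every $V_n$ is multiply connected and each $f^n(\gamma)$ remains non-contractible in $V_n$.

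For (ii), the pole-case analysis at each stage of (i) also yields that $B(f^n(\gamma))$ contains a pole of $f$ for every $n$. The orbit $\{V_n\}$ is either wandering or eventually lands on a periodic cycle of multiply connected Fatou components. I would rule out a multiply connected wandering orbit and then force the periodic cycle to consist of Herman rings, eliminating the remaining multiply connected candidates: periodic attracting or parabolic components would force $f^n(\gamma)$ to contract toward the attracting or parabolic cycle while still enclosing a pole, and infinitely connected Baker periodic components would push $f^n(\gamma)$ toward $\infty$, both incompatible with the perpetual enclosure of a pole. Making these incompatibilities rigorous is expected to draw on the explicit forms $f = S(\tanh \circ g)$ for $f \in M_o^2$ and $f = 1/h + a$ for $f \in M_o^1$, together with the additional hypothesis --- $f \in M_o^2$ or $|\mathcal{J}_{O_f}| > 1$ in the $M_o^1$ case --- which provides the Julia-set structure near the omitted values needed to block these degenerations.

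The principal obstacle is step (ii): step (i) is a clean omitted-value plus argument-principle computation once the topological dichotomy is in place, but step (ii) must upgrade the topological persistence of non-contractibility to a full dynamical classification of the periodic cycle type (and also exclude multiply connected wandering), and it is precisely at this point that the additional hypothesis on $O_f$ and $\mathcal{J}_{O_f}$ is expected to play its essential role.
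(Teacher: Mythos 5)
Your step (i) is essentially sound and recovers the content of the paper's first lemma in Section~3: the dichotomy ``no pole in $B(\gamma)$ versus a pole in $B(\gamma)$'', combined with the observation that an omitted value cannot lie in the closure of $f(B)$ for bounded $B$, does yield $c(V_n)>1$ for all $n$ (modulo the technicality that $f^n(\gamma)$ need not stay a Jordan curve, which the paper handles by working with $\partial(f^n(B(\gamma)))$ and with $B(\cdot)$ defined as the union of bounded complementary components). Note, however, that step (i) uses only \emph{one} omitted value in $\mathcal{J}(f)$ and its conclusion is strictly weaker than SCH.

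The genuine gap is step (ii), and the mechanism you propose for it does not work. First, your premise that ``$B(f^n(\gamma))$ contains a pole for every $n$'' does not follow from step (i): the dichotomy forces a pole only at certain stages, and the correct statement is that there is an increasing sequence of times $n_k$ and curves $\gamma_{n_k}\subseteq f^{n_k}(\gamma)$ with $O_f\subset B(\gamma_{n_k})$. Second, ``perpetual enclosure of a pole'' does not produce the claimed incompatibilities: if $f^{n_k}|_V\to\infty$ then $B(\gamma_{n_k})$ contains ever larger discs around the fixed finite omitted values and therefore still contains poles, so the Baker case is not excluded this way (the paper needs a separate tract argument over a neighbourhood of an omitted value to kill the limit $\infty$); and in the finite‑constant case you must still exclude the constant being a pole or pre‑pole. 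Third, you never actually use, or correctly locate, the hypothesis ``$f\in M_o^2$ or $|\mathcal{J}_{O_f}|>1$''; it enters not through the representations $S(\tanh g)$ or $1/h+a$ (which play no role in the paper's proof) but through the elementary fact that it supplies \emph{two distinct points} $c_1,c_2$ --- the two omitted values, respectively two points of the connected set $\mathcal{J}_{O_f}$ --- trapped in $B(\gamma_{n_k})$ for every $k$. Normality of $\{f^{n}\}$ on $V$ then shows a subsequential limit of $\{f^{n_k}\}$ cannot be a finite constant $c$ (the curves would shrink into small discs about $c$, expelling whichever of $c_1,c_2$ differs from $c$) nor $\infty$ (tract argument), hence is non‑constant; this single fact simultaneously rules out wandering domains (whose limit functions are constant), attracting, parabolic and Baker components, leaving a Herman ring once Siegel disks are excluded by multiple connectivity. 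Without this two‑point trapping your plan has no way to exclude multiply connected wandering domains --- and such domains genuinely occur for $f\in M_o^1$ when $\mathcal{J}_{O_f}$ is a singleton, as the paper's later theorems show, so any correct proof must invoke the hypothesis precisely here.
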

The next two results deal with the situation when $O_f$ intersects
the Fatou set. If the set $O_f$ intersects two Fatou components
$U_1$ and $U_2$, then exactly one of the following conditions holds: \\
(a) Both  $U_1$ and $U_2$ are unbounded,\\
 (b) Exactly one of $U_1$ and $U_2$
is unbounded, call it $U_1$ and $U_1$ is simply connected,\\
 (c)
Exactly one of $U_1$ and $U_2$ is unbounded, call it $U_1$ and $U_1$
is multiply connected with $U_2$ contained in the unbounded
component of $ U_1^{c}$, \\
(d) Exactly one of $U_1$ and $U_2$ is unbounded, call it $U_1$ and
$U_1$ is multiply connected with $U_2$ contained in a bounded
component of $ U_1^{c}$,\\
 (e) Both  $U_1$ and $U_2$ are bounded.
\begin{theorem}
\label{twofatoucomponent} Suppose $f \in M_o$.
 Let the set $O_f$ intersect two distinct Fatou components
$U_1$ and $U_2$ of $f$. Then,
\begin{enumerate}
\item If either (a) or (b) is satisfied, then all the Fatou
components of $f$ are simply connected.
\item The possibility (c) cannot be true.
\item If either (d) or (e) is satisfied, then each Fatou component of $f$ is SCH.

\end{enumerate}
\end{theorem}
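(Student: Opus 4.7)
The plan is to reduce all three parts to a single winding-number obstruction. Write $O_f=\{a,b\}$ with $a\in U_1$ and $b\in U_2$. Given any multiply connected Fatou component $V$, choose a Jordan curve $\gamma\subset V$ whose bounded complement $B(\gamma)$ meets $\jl$. A standard normality argument forces $B(\gamma)$ to contain $P\ge1$ poles of $f$ counted with multiplicity, since otherwise $f$ would extend holomorphically across $B(\gamma)$ and push $B(\gamma)$ into $V$. Because $a$ and $b$ are omitted, the argument principle then yields
\[
n(f\circ\gamma,a)\;=\;n(f\circ\gamma,b)\;=\;-P\;\neq\;0,
\]
so both $a$ and $b$ lie in bounded components of $\hC\setminus f(\gamma)$.

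For part (1), suppose a multiply connected $V$ exists with $\gamma$ as above; the image $f(\gamma)$ lies in a single Fatou component $V_1$ and hence is disjoint from at least one of $U_1,U_2$. In case (a) the missed $U_i$ is unbounded and connected, so it lies entirely in the unbounded component of $\hC\setminus f(\gamma)$, forcing the winding around the corresponding omitted value to be zero and contradicting the displayed equation. In case (b), $U_1$ is unbounded and simply connected, so $U_1^c$ is connected; if $V_1\neq U_1$ the same argument applied to $a$ yields the contradiction, while if $V_1=U_1$ then $B(f(\gamma))\subseteq U_1$ (because $U_1^c$ is connected and contains $\infty$), so $b\notin B(f(\gamma))$, contradicting the nonzero winding around $b$.

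For part (2), apply the device with $V=U_1$ in case (c) and choose $\gamma\subset U_1$ enclosing a single bounded component of $U_1^c$ selected far from $U_2$. Since $U_2$ lies in the unbounded component of $U_1^c$, there is a path $\sigma$ in $U_1^c$ from $U_2$ to $\infty$. The crucial topological step is to arrange that $\sigma$ avoids the Fatou component $V_1$ containing $f(\gamma)$; this is where the tract structure of $f^{-1}(U_1)$ near infinity enters, since the components of $f^{-1}$ of a small disk around $a$ are unbounded simply connected logarithmic tracts clustering at $\infty$, providing room outside $V_1$ for $\sigma$ to escape. Once $\sigma$ is so chosen, $U_2$ lies in the unbounded component of $\hC\setminus f(\gamma)$, so $b$ has winding zero, again contradicting the displayed equation and ruling out case (c).

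For part (3), the winding obstruction is compatible with cases (d) and (e), because both $a$ and $b$ may lie in the same bounded component of $\hC\setminus f(\gamma)$. To upgrade this to the SCH conclusion I would iterate the winding argument at each $f^n\circ\gamma$ to show $c(V_n)>1$ for all $n$ whenever $c(V)>1$, ruling out any ``multiply-to-simply'' transition along the orbit. Invoking Theorem~\ref{sc1} (in the remaining case $O_f\subset\ft$) together with the usual classification of periodic Fatou components -- attracting, parabolic, Siegel, Baker (either simply connected or infinitely connected), and Herman -- the only periodic target of finite connectivity greater than $1$ is a Herman ring, so the forward orbit of $V$ must meet one. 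The main obstacle I anticipate is the tract-avoidance step in part (2); once that is in hand, the remaining deductions are routine winding-number and normality bookkeeping.
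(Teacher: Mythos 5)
Your opening step already contains an error: from $B(\gamma)\bigcap\jl\neq\emptyset$ it does not follow that $B(\gamma)$ contains a pole of $f$ itself; $B(\gamma)$ is only guaranteed to contain a pre-pole, so the winding number $n(f\circ\gamma,a)$ may well be zero at the first step. The paper's Lemma~\ref{general1} repairs exactly this by iterating: there is a least $\tilde n$ with a pole of $f$ in $f^{\tilde n-1}(B(\gamma))$, and the omitted-value/argument-principle conclusion is drawn for a curve $\gamma_n\subseteq f^{n}(\gamma)$ rather than for $\gamma$. Granting that repair, your part (1) is essentially the paper's argument. Part (2), however, is genuinely incomplete, and the ``tract-avoidance'' step you flag is not the right move: you do not need to construct a path avoiding $V_1$. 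The correct observation is that once a curve $\alpha_n\subseteq f^{n}(\alpha)$ with $O_f\subset B(\alpha_n)$ is produced, the unbounded component $U_1$ meets $B(\alpha_n)$ (it contains $a$) and hence meets $\alpha_n$, which forces $\alpha_n\subset U_{1+n}=U_1$; the path from $U_2$ to $\infty$ inside $U_1^{c}$, which exists by hypothesis (c), then automatically avoids $\alpha_n$ and places $b$ in the unbounded component of $\hC\setminus\alpha_n$, a contradiction. No information about tracts is needed, and it is far from clear that the tracts over $a$ would supply the path you want.

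The larger gap is in part (3). Establishing $c(V_n)>1$ for all $n$ does not yield SCH: you must also show that $V$ eventually lands on a Herman ring, and your appeal to ``the only periodic target of finite connectivity greater than $1$'' begs two questions --- why $V$ cannot be a wandering domain whose iterates stay multiply connected forever, and why a periodic target could not be an infinitely connected attracting, parabolic or Baker domain. (Your invocation of Theorem~\ref{sc1} is also misplaced: its hypothesis $O_f\bigcap\jl\neq\emptyset$ is incompatible with the present setting, where both omitted values lie in Fatou components.) The paper's Lemma~\ref{general3} supplies the missing mechanism: in cases (d) and (e) one fixes two distinct points $c_1,c_2$ on $\partial U_2$ (resp.\ $\partial U_1$) that lie in $B(\alpha)$ for \emph{every} admissible curve $\alpha$; a finite constant limit function of $f^{n_k}$ on $V$ would force the curves $\gamma_{n_k}$ to shrink past the separation of $c_1$ and $c_2$, and the limit $\infty$ is excluded by a separate argument using a tract over an omitted value. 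Hence the limit functions are non-constant, $V$ is not wandering, the periodic target is not attracting, parabolic or Baker, and being multiply connected it must be a Herman ring. Without this step your part (3) does not close.
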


\begin{theorem}
\label{onefatoucomponent}
 Suppose $f \in M_o$.
 Let $O_f$ be contained in a Fatou component $ U$ and
$V$ be a Fatou component with $V_n \neq U$ for any $n
  \in \mathbb{N} \bigcup \{0\}$. Recall that `for all $n$' means `for all $n \in \mathbb{N} \bigcup
\{0\}$'.
  \begin{enumerate}
  \item If $U$ is unbounded, then $c(V_n)=1$ for all $n$.
  \item If $U$ is bounded, then $V$ is SCH.
  \item If $U$ is wandering, then $c(U_n)=1$ for all $n$.
\item Let $U$ be pre-periodic but not periodic.
If $U$ is unbounded, then $c(U_n)=1$ for all $n$. If $U$ is bounded,
then $U$ is SCH.
\item If $U$ is periodic, then $c(U_n)=1$ or $\infty$ for all $n$.

\end{enumerate}
\end{theorem}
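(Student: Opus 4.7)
All five parts rest on the following structural observation, which I would establish at the outset. Let $W$ be a Fatou component with $W\cap O_f=\emptyset$ and let $\gamma\subset W$ be a Jordan curve whose bounded complement $B(\gamma)$ meets $\jl$ (i.e.\ $\gamma$ is non-contractible in $W$). Because $\partial f(B(\gamma))\subseteq f(\gamma)\subset W_1$, the open set $f(B(\gamma))$ is a union of components of $\hC\setminus f(\gamma)$. Two cases then arise. \emph{Case (a):} $B(\gamma)$ contains no pole, so $f|_{B(\gamma)}$ is holomorphic, $f(B(\gamma))$ is bounded and contained in $\overline{B(f(\gamma))}$, and since $f(B(\gamma))$ meets $\jl$ the image curve $f(\gamma)$ is non-contractible in $W_1$ and $c(W_1)>1$. \emph{Case (b):} $B(\gamma)$ contains a pole; then $\infty\in f(B(\gamma))$, and the Picard-type behaviour at the direct singularities of $f^{-1}$ over $O_f$ shows that $f(B(\gamma))\supseteq\hC\setminus(\overline{B(f(\gamma))}\cup O_f)$. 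Since $O_f\subset U$ and $\partial f(B(\gamma))\subset W_1$, this forces $W_1=U$ whenever $f(\gamma)$ actually encloses $O_f$.

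\textbf{Parts (1) and (2).} Assume some $V_n$ is multiply connected and pick $\gamma\subset V_n$ non-contractible. In (1), $U$ is unbounded and therefore cannot lie inside the bounded region $B(f(\gamma))$, so Case~(b) would require $V_{n+1}=U$, contradicting the hypothesis that $V$ never lands on $U$; only Case~(a) survives, and iteration yields $c(V_m)>1$ together with a non-contractible $f^{m-n}(\gamma)\subset V_m$ for every $m\geq n$. A spherical-diameter / normal-families argument (of the kind already used in the proofs of Theorems~\ref{sc1} and \ref{twofatoucomponent}) rules out such an infinite chain and gives $c(V_n)=1$ for every $n$. In (2), $U$ is bounded and Case~(b) is a priori possible, but it again forces $V_{n+1}=U$ and is excluded by hypothesis; only Case~(a) propagates the non-contractible curves forward, so the orbit $V_m$ must eventually enter a periodic cycle that retains the non-contractible curve. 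By Theorem~\ref{sc1} applied to that terminal cycle, the periodic component must be a Herman ring, which is precisely the SCH conclusion.

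\textbf{Parts (3)--(5).} Part (3) applies the same dichotomy with $W=U_n$: if $U$ wanders and some $U_n$ were multiply connected then Case~(b) would give $U_{n+1}=U=U_0$, contradicting wandering, so Case~(a) would produce an infinite chain of multiply connected wandering domains, ruled out again by the shrinking argument. Part (4) combines (1)/(2) with the fact that a pre-periodic but non-periodic $U$ eventually lands on a cycle not containing $U$, so the same dichotomy applies without change. For (5), $U$ is $p$-periodic and we must rule out $c(U_n)=2$: if $p=1$ and $c(U)=2$ then $U$ is an invariant Herman ring, contradicting the classical fact that Herman rings contain no singular values; for $p>1$, applying the mechanism around the cycle yields a chain of doubly-connected periodic components whose first-return map is conformally conjugate to an irrational rotation of an annulus, again incompatible with the presence of an omitted value in $U$.

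\textbf{Main obstacle.} The hardest step is ruling out $c(U_n)=2$ for periodic $U$ of period $p>1$, because it is an open problem whether such components are always Herman rings. My plan is to exploit the omitted-value hypothesis directly: each omitted value in $U$ corresponds to a direct transcendental singularity whose asymptotic tract must lie in $U$ and be preserved by the first-return map $f^p:U\to U$. On an annular cycle this return map is forced (by Riemann--Hurwitz and the absence of any other singular value on the cycle) to be a rotational covering, which admits no such tract, giving the required contradiction. The secondary technical difficulty is the shrinking-chain argument inside Case (a) in parts (1)--(3); it follows the same template as Theorems~\ref{sc1} and \ref{twofatoucomponent} and is where I expect most of the bookkeeping.
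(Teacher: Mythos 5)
Your central dichotomy is essentially the paper's Lemma~\ref{general1}, but the pivotal claim in Case~(b) --- that once $f(\gamma)$ encloses $O_f\subset U$ we are ``forced'' to have $W_1=U$ --- is only valid when $U$ is \emph{unbounded}: in that case $U$ must reach $\infty$, hence must cross the curve $f(\gamma)\subset W_1$, giving $U=W_1$. When $U$ is bounded, $U$ (indeed all of $\partial U$) can sit entirely inside the bounded complementary region of the curve without the two components coinciding, and this is precisely the scenario the theorem must accommodate: it is why parts (2)--(4) conclude only ``SCH'' (multiply connected orbits landing on a Herman ring are genuinely possible) rather than $c(V_n)=1$. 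Your proofs of (2) and (3) exclude Case~(b) on the strength of this false implication, and are therefore broken; worse, if Case~(b) really never occurred then no forward image of $B(\gamma)$ would contain a pole, $f^n$ would be analytic on $B(\gamma)$ for every $n$, and $B(\gamma)\cap\mathcal{J}(f)\neq\emptyset$ would already be contradicted (prepoles are dense in $\mathcal{J}(f)$ for $f\in M$), so your own logic would yield the too-strong conclusion $c(V_n)=1$ in part (2). The paper's route for bounded $U$ is different and you need its substance: since $V_n\neq U$ for all $n$, every curve $\alpha$ in $\bigcup_k V_k$ with $O_f\subset B(\alpha)$ satisfies $\partial U\subset B(\alpha)$, so one may fix \emph{two points} $c_1,c_2\in\partial U$ lying in $B(\alpha)$ for every such $\alpha$; Lemma~\ref{general3} then uses these two marked points in a normal-families argument to kill all constant limit functions and force $V$ to land on a Herman ring. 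Your appeal to ``a spherical-diameter argument as in Theorems~\ref{sc1} and \ref{twofatoucomponent}'' skips exactly the step (producing the two independent marked points) that makes that argument run, and your appeal to Theorem~\ref{sc1} for the terminal cycle is inapplicable since its hypothesis $O_f\cap\mathcal{J}(f)\neq\emptyset$ fails here.

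Part (5) is also off track. You identify ruling out $c(U_n)=2$ as the ``main obstacle'' and propose a tract/Riemann--Hurwitz argument resting on ``the absence of any other singular value on the cycle,'' which is not a hypothesis and need not hold. The intended argument is much shorter: $U$ contains an omitted value $a$, so $f^p\colon U\to U$ omits $a$ and in particular cannot be the (surjective, univalent) return map of a Herman ring; more directly, by Bolsch's theorem (Theorem~D) the non-proper map $f^p\colon U\to U\setminus O_f$ with $c(U\setminus O_f)\geq 3$ would force $c(U)=\infty$ if $c(U)=2$, a contradiction, so the only possible connectivities on the cycle are $1$ and $\infty$. Finally, note that the paper's part (1) needs no limiting argument at all: Lemma~\ref{general1} produces a curve $\alpha\subset V_m$ with $O_f\subset B(\alpha)$, whence $\partial U\subset B(\alpha)$ because $V_m\neq U$, which is impossible for unbounded $U$; and parts (3)--(4) are then finished by the elementary Lemma~\ref{iffsc} ($c(U_1)=1\Rightarrow c(U)=1$ when $U$ meets $O_f$), another ingredient absent from your outline.
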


\begin{example}
\label{onefatoucomponent-remark} Both the possibilities of
Theorem~\ref{onefatoucomponent} (v) can be true. To see it, consider
$\lambda \frac{z^m}{\sinh^m z}$ where $m$ or $\frac{m}{2}$ is an odd
natural number and $\lambda \in \mathbb{R} \setminus \{0\}$. It is
shown in \cite{tkmgpp-unboundedsingular} that the only omitted value
$0$ is in $\mathcal{F}(f_\lambda)$ for all $\lambda$. Further, a
critical parameter $\lambda^* >0$ is found such that
$\mathcal{F}(f_\lambda)$ is  connected and
$c(\mathcal{F}(f_\lambda))=\infty$ for all $|\lambda| < \lambda^*$
and for $|\lambda|> \lambda^*$, each component of
$\mathcal{F}(f_\lambda)$ is simply connected. Denoting the Fatou
component containing $0$ by $U$ we have for $|\lambda| < \lambda^*$,
$U=\mathcal{F}(f_\lambda)$ and $ c(U_n) = c(\mathcal{F}(f_\lambda))
= \infty$ for all $n$. For $|\lambda| > \lambda^*$, $U$ is a
$2$-periodic component and $ c(U_n) =1$ for all $n$.
\end{example}
\begin{remark}
The Fatou component $V$ as assumed in the
Theorem~\ref{onefatoucomponent} may not always exist. This is
illustrated in the above example where each Fatou component
eventually lands on a periodic Fatou component ($1$-periodic for
$|\lambda|< \lambda^*$ and $2$-periodic otherwise) that contains all
the omitted values of the function.
\end{remark}
The residual Julia set of $f$, denoted by $\mathcal{J}_r(f)$, is
defined as the set of all those points in  $\mathcal{J}(f)$ that do
not belong to
 the boundary of any Fatou component. As observed by Baker and Dom\'{i}nguez~\cite{res-dom-fagella},
  this set is also residual in the sense of category theory.
A component of $\mathcal{J}(f)$ that is contained in $\mathcal{J}_r(f)$ is called a buried  component.
 For $|O_f|=1,~ O_f \bigcap \mathcal{J}(f)\neq \emptyset$ and $ |\mathcal{J}_{O_f}|=1$, the Fatou set has at
 least one multiply connected component. The next two theorems deal with all
 the multiply connected Fatou components in this situation.
 We say a point $z$ is a pre-pole if $f^{n}(z)= \infty$ for some $n \in \mathbb{N}$.
  Recall that for two Fatou components $U$ and $V$, $U$
 is said to land on $V$ if $U_n=V$ for some $n \in \mathbb{N}\bigcup
 \{0\}$.

\begin{theorem}
\label{singleton-notburied} Let $f \in M_o^1$, $O_f=\{a\} \subset
\mathcal{J}(f)$
 and $|\mathcal{J}_{a}|=1$. If $\mathcal{J}_{a}$ is not a buried component of
the Julia set, then $f$ has an infinitely connected Baker domain $B$ with period $p >1$
and $a$ is a pre-pole. Further, for each multiply connected Fatou
component $U$ of $f$ not landing on any Herman ring, there is a
non-negative integer $n$ depending on $U$ such that $U_n=B$. In this
case, singleton buried components are dense in $\mathcal{J}(f)$.
\end{theorem}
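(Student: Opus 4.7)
The plan is to exploit the topology of the non-buried singleton $\{a\}$ together with the direct-singularity tract structure of the omitted value $a$ in order to locate a Baker cycle, and then derive the remaining claims from this cycle.

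Since $\{a\}$ is a non-buried component of $\jl$, fix a Fatou component $V$ with $a \in \partial V$. I would first show $V$ is multiply connected and contains Jordan curves encircling $a$ arbitrarily close to $a$. The key is that $\{a\}$ is a component of the compactum $\jl \cap \overline{D_\epsilon(a)}$ for every small $\epsilon > 0$; a standard planar-continuum separation lemma then furnishes Jordan curves $\gamma_\epsilon \subset \ft \cap D_\epsilon(a)$ around $a$ disjoint from $\jl$. Each $\gamma_\epsilon$ is connected and so lies in a single Fatou component; since $V$ meets every disk about $a$ and cannot be contained in arbitrarily small disks, $\gamma_\epsilon \subset V$ for all small $\epsilon$. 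Were $V$ simply connected in $\hC$, the disk bounded by $\gamma_\epsilon$ not containing $a$ would lie in $V$, forcing $\infty \in V \subset \ft$ and contradicting $\infty \in \jl$. Hence $V$ is multiply connected with $a$ in one of its bounded complementary components.

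Next, I would use the direct (in fact logarithmic) singularity of $f^{-1}$ over $a$, guaranteed because $a$ is an omitted value. For small $r$ pick an unbounded simply connected tract $T \subset \C$ on which $f\colon T \to D_r(a) \setminus \{a\}$ is a universal covering with $f(z) \to a$ as $z \to \infty$ in $T$. Lifting $\gamma_\epsilon$ through $f|_T$ gives a connected unbounded curve $\tilde\gamma \subset T$ with both ends escaping to $\infty$ and $f(\tilde\gamma) = \gamma_\epsilon$; since $\gamma_\epsilon \subset \ft$ we also have $\tilde\gamma \subset \ft$, so $\tilde\gamma$ sits in a single unbounded Fatou component $\tilde V$ with $\infty \in \partial\tilde V$ and $f(\tilde V) \subset V$. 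Iterating the lift yields an infinite family of pairwise non-homotopic Jordan arcs in Fatou components that land on $V$.

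The forward orbit $V, V_1, V_2, \dots$ is then eventually periodic; if $V$ were wandering, multiple connectivity would contradict the eventual-connectivity result for wandering domains asserted in the introduction. Let $B = V_N$ be the periodic component, of period $p$. Pull-backs of $\gamma_\epsilon$ by iterates of $f$ contribute infinitely many pairwise non-homotopic Jordan curves inside the components of the cycle containing $B$; in particular $B$ has infinitely many bounded complementary components, so $B$ is infinitely connected. Hence the limit functions of $\{f^{np}\}$ on $B$ must be constant (the Siegel and Herman alternatives require connectivity $1$ or $2$, and the attracting/parabolic alternatives would be inconsistent with the tract-escape dynamics), and the tract forces this constant to be $\infty$ (or a pre-image of $\infty$), so $B$ is a Baker domain. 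The bound $p > 1$ comes from the paper's non-existence result for invariant Baker domains for $f \in M_o$ indicated in the abstract. For $a$ being a pre-pole, continuity gives $f^n(a) \in \partial V_n$ as long as $f^{n-1}(a)$ is not a pole; since $a$ cannot be a periodic point of $f$ (otherwise $a \in f(\C)$, contradicting its omission), the Baker-escape dynamics on the cycle force the orbit of $a$ to hit $\infty$ in finitely many steps.

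For the remaining assertions, any multiply connected Fatou component $U$ not landing on a Herman ring admits the same tract-plus-Jordan-curve analysis, so $U$ must eventually land on $B$, since $B$ is the unique available Baker escape in this configuration. Density of singleton buried components in $\jl$ then follows because backward iterates of carefully chosen boundary points of $B$ form a dense family of singleton components of $\jl$, each lying strictly inside a bounded complementary piece of some iterate of $B$ and therefore buried. The hard part will be the third paragraph: simultaneously producing the infinitely connected Baker domain $B$ of period $p > 1$ and deducing that $a$ is a pre-pole, both of which rest on a delicate analysis of how the tract $T$ interacts with the periodic cycle.
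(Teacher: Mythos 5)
Your overall skeleton (locate a multiply connected component with $a$ on its boundary, show it is eventually periodic, classify the periodic component as an infinitely connected Baker domain of period $>1$, deduce that $a$ is a pre-pole, then get density of singleton buried components) matches the paper, but three load-bearing steps are broken. First, your argument that $V$ cannot wander is circular: you appeal to the eventual-connectivity result for wandering domains, but that is Theorem~\ref{eventualconnectivity-tk}, which is proved \emph{from} Theorems~\ref{singleton-notburied} and \ref{singleton-buried}; moreover multiply connected wandering domains are not impossible in this class --- Theorem~\ref{singleton-buried} exhibits exactly when they occur. The paper's actual argument is that the forward images $f^{n_k}(\gamma)$ contain closed curves $\gamma_{n_k}$ surrounding $\mathcal{J}_a$ with $f^{n_k}|_U\to a$, so if the components $U_{n_k}$ were pairwise distinct these shrinking curves would force $\mathcal{J}_a$ to be buried, contradicting the hypothesis. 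Second, your justification of $p>1$ cites ``the paper's non-existence result for invariant Baker domains,'' which does not exist (the abstract rules out invariant Herman rings and Baker \emph{wandering} domains, not invariant Baker domains). The correct reason is that the finite point $a$ occurs as a constant limit function of $f^n$ on the Baker domain; on an invariant Baker domain the only limit is $\infty$, and since limit functions on a Baker cycle are $\infty$ or its pre-images, $a$ finite forces both $p>1$ and $a$ a pre-pole in one stroke --- this also replaces your vague ``Baker-escape dynamics force the orbit of $a$ to hit $\infty$.''

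Third, your density argument (``backward iterates of carefully chosen boundary points of $B$ \dots each lying strictly inside a bounded complementary piece \dots and therefore buried'') does not establish that these points are singleton components of $\mathcal{J}(f)$, nor that they are buried; the paper instead checks that $\mathcal{F}(f)$ has no completely invariant component (since $B$ has period $>1$ and any other completely invariant component would force $c(B)=1$) and then invokes Theorem~B of Ng--Zheng--Choi, using $c(B)=\infty$. Two smaller issues: the tract machinery you introduce is both unnecessary (the paper works entirely with forward images of curves via Lemma~\ref{general1} and normality) and overstated --- an omitted value gives a \emph{direct} singularity, not automatically a logarithmic one, so the universal-covering tract you lift through is not guaranteed; and your infinite-connectivity claim for $B$ via ``pairwise non-homotopic pull-backs'' is hand-waved where the paper simply applies Lemma~\ref{oneomittedvalue-hermanring} to propagate the singleton boundary component $\mathcal{J}_{f^n(a)}$ along the orbit, giving $c(U_n)=\infty$ for all $n$.
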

\begin{cor}
Let $f \in M_o^1$, $O_f=\{a\} \subset \mathcal{J}(f)$
 and $|\mathcal{J}_{a}|=1$. Then,
 \begin{enumerate}
 \item $\mathcal{J}_{a}$ is not
buried if and only if $a$ is a pre-pole.
\item $f$ has no completely invariant Fatou component.

 \end{enumerate}
 \label{cor-singleton}
\end{cor}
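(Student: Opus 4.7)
The plan is to deduce both claims from Theorem~\ref{singleton-notburied}, augmented by a branched-covering and path-lifting argument near the pre-pole. For Part~(1), the forward direction is exactly Theorem~\ref{singleton-notburied}. For the converse, I would suppose $a$ is a pre-pole, let $n\geq 1$ be minimal with $f^{n}(a)=\infty$, and show that some Fatou component has $a$ on its boundary.

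The key step is to prove $\mathcal{J}_{\infty}=\{\infty\}$. For small enough neighborhoods $D_{\eta}(a)$ of $a$ and $N_{\infty}$ of $\infty$, the iterate $f^{n}$ restricts to a proper branched covering $D_{\eta}(a)\to N_{\infty}$ of degree equal to the pole order $k$ of $f^{n}$ at $a$, branched only at $a$ with ramification value $\infty$. For any connected $C\subseteq\mathcal{J}(f)\cap N_{\infty}$ containing $\infty$, the preimage $(f^{n})^{-1}(C)\cap D_{\eta}(a)$ is connected and contains $a$: each sheet over $C\setminus\{\infty\}$ accumulates at $a$ as $C$ accumulates at $\infty$, and all sheets glue together through $\{a\}$. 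Since $\{a\}$ is the component of $a$ in $\mathcal{J}(f)\cap D_{\eta}(a)$, this pullback equals $\{a\}$, forcing $C=\{\infty\}$. A boundary-bumping argument on the continuum $\mathcal{J}_{\infty}$ then promotes this local statement to $\mathcal{J}_{\infty}=\{\infty\}$ globally: otherwise the component $\{\infty\}$ of $\infty$ in $\mathcal{J}_{\infty}\cap N_{\infty}$ would have to meet $\mathcal{J}_{\infty}\cap\partial N_{\infty}$, which it cannot since $\infty$ lies in the interior of $N_{\infty}$. Consequently, there is $R>0$ with $\{|z|>R\}\subset\mathcal{F}(f)$, and by connectedness this annular region lies in a single unbounded Fatou component $V$. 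Pick a curve $\gamma\colon[0,1)\to V$ with $\gamma(t)\to\infty$, and lift its tail through the unbranched $k$-to-one covering $f^{n}\colon D_{\delta}^{*}(a)\to\{|w|>M\}$ to $\widetilde{\gamma}\subset D_{\delta}^{*}(a)\cap\mathcal{F}(f)$ with $\widetilde{\gamma}(t)\to a$. As a connected subset of $\mathcal{F}(f)$, $\widetilde{\gamma}$ is contained in a single Fatou component $U$, and $a\in\partial U$ follows.

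For Part~(2), argue by contradiction: suppose $U$ is a completely invariant Fatou component of $f$. The standard identity $\partial U=\mathcal{J}(f)$ places $a\in\partial U$, so $\mathcal{J}_{a}$ is not buried. Theorem~\ref{singleton-notburied} therefore applies and, beyond the pre-pole conclusion, delivers the density of singleton buried components in $\mathcal{J}(f)$. Any such singleton buried $\{z\}$ satisfies $z\in\mathcal{J}(f)=\partial U$, so $z$ lies on the boundary of the Fatou component $U$, contradicting the fact that $\{z\}$ is buried. Hence no completely invariant Fatou component exists.

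The main obstacle is the branched-covering step used to establish $\mathcal{J}_{\infty}=\{\infty\}$: one must make rigorous the pullback-of-continua claim through the local branched cover $f^{n}$ at the pre-pole (so that singleton-ness of $\{a\}$ propagates to $\{\infty\}$ locally) and then invoke the boundary-bumping theorem for continua to rule out a larger global $\mathcal{J}_{\infty}$. Once $\mathcal{J}_{\infty}=\{\infty\}$ is in place, the ray-lifting in Part~(1) and the use of Theorem~\ref{singleton-notburied}'s density conclusion in Part~(2) are immediate.
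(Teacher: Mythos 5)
Your Part (2) and the forward direction of Part (1) are sound. Part (2) is a small but legitimate variant of the paper's argument: the paper derives the contradiction from the classification in Theorem~\ref{singleton-notburied} ($U$ would have to land on a Herman ring or a Baker domain of period $p>1$, neither compatible with complete invariance), whereas you derive it from the density of singleton buried components, which is incompatible with $\partial U=\mathcal{J}(f)$. Either route works.

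The converse of Part (1), however, has a fatal gap. Your local analysis is fine up to the conclusion $\mathcal{J}_{\infty}=\{\infty\}$: the pullback-of-continua argument through the proper degree-$k$ map $f^{n}\colon D_\eta(a)\to N_\infty$ with $(f^{n})^{-1}(\infty)\cap D_\eta(a)=\{a\}$, together with boundary bumping, does show that the \emph{component} of $\infty$ in $\mathcal{J}(f)$ is a singleton. But the next step, ``consequently there is $R>0$ with $\{|z|>R\}\subset\mathcal{F}(f)$,'' confuses a singleton component with an isolated point. For every $f\in M$ the backward orbit of $\infty$ is infinite and unbounded, so prepoles (indeed poles, when there are infinitely many) accumulate at $\infty$; hence no punctured neighbourhood of $\infty$ is ever contained in $\mathcal{F}(f)$. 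The function $\lambda\tan z$ with $0<|\lambda|<1$ makes this concrete: its Julia set is totally disconnected, so $\mathcal{J}_{\infty}=\{\infty\}$, yet the poles $(k+\tfrac12)\pi$ lie in $\mathcal{J}(f)$ and tend to $\infty$. Since the unbounded Fatou component $V$ and the curve $\gamma\to\infty$ inside it need not exist, the entire lifting construction that was supposed to put $a$ on $\partial U$ collapses; and producing even a single Fatou component with $\infty$ on its boundary is essentially the whole difficulty, not a formality. The paper avoids this by arguing contrapositively: assuming $\mathcal{J}_a$ is buried, it deduces that $\mathcal{J}_{\infty}$ is buried, and then obtains a contradiction from the fact that a small continuum $c\subset\mathcal{J}(f)$ separating $\mathcal{J}_a$ from $\infty$ surrounds the omitted value $a$, so $f^{-1}(c)$ has an unbounded component in the Julia set. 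You would need to replace your ``isolated at infinity'' step with an argument of that kind (or otherwise exhibit a single Fatou component accessible from $\infty$) before the rest of your construction can be used.
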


\begin{proof}
\begin{enumerate}
 \item
If $\mathcal{J}_{a}$ is not buried then $a$ is a pre-pole by
Theorem~\ref{singleton-notburied}. Conversely, let $a$ be a
pre-pole. Then $\mathcal{J}_{\infty}$ is a buried component of the
Julia set whenever $\mathcal{J}_{a}$ is. Suppose
$\mathcal{J}_{a}$ is a buried component. By taking a continuum $c $
in $\mathcal{J}(f)$ with sufficiently small diameter such that it
separates $\mathcal{J}_{a}$ from $\infty$ we can see that
$f^{-1}(c)$ has an unbounded component which must be in the Julia
set. However, it is not possible as $\mathcal{J}_{\infty}$ is a
buried component. Thus $\mathcal{J}_a$ is not buried.
\item   If $f$ has a completely invariant Fatou component $U$, then
 $\mathcal{J}_a
\subset \mathcal{J}(f) = \partial U$, which means that
$\mathcal{J}_a$ is not buried and $c(U)>1$. By
Theorem~\ref{singleton-notburied}, $U$ must land on a Herman ring or
on a Baker domain of period greater than $1$ which contradicts the
complete invariance of $U$. To see this, note that a Herman ring is
not completely invariant by definition. Hence, the claim follows.
\end{enumerate}
\end{proof}

\begin{theorem}
\label{singleton-buried} Let $f \in M_o^1$, $O_f=\{a\} \subset
\mathcal{J}(f)$ and $|\mathcal{J}_{a}|=1$. If $\mathcal{J}_{a}$ is a
buried component of the Julia set, then all the multiply connected
Fatou components not landing on any Herman ring are wandering and
$a$ is a limit point of
  $\{f^n\}_{n>0}$ on each of these wandering domains. Further, if $\mathcal{F}(f)$ has a  multiply connected
   wandering domain, then the forward orbit of $a$ is an infinite set and singleton buried components are dense
    in $\mathcal{J}(f)$.
\end{theorem}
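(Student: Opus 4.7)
The plan hinges on Corollary~\ref{cor-singleton}(i): since $\mathcal{J}_a$ is buried, $a$ is not a pre-pole, so $\{f^n(a)\}_{n\geq 0}$ is a well-defined sequence in $\mathcal{J}(f)$. This will be combined with a topological lemma in the spirit of the arguments for Theorem~\ref{singleton-notburied}: any non-null-homotopic simple closed curve $\gamma$ lying in a multiply connected Fatou component satisfies $a\in B(\gamma)$, since $a$ is the unique omitted value, $\mathcal{J}_a=\{a\}$, and $B(\gamma)$ necessarily meets $\mathcal{J}(f)$.

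First I would show every multiply connected Fatou component $U$ not landing on a Herman ring is wandering. If not, some iterate $U_n$ is periodic of period $p$; since Siegel disks are simply connected and Herman rings are excluded, $U_n$ is an attracting, parabolic, or Baker domain, and is multiply connected. Picking a non-contractible $\gamma\subset U_n$ and applying the topological lemma along the cycle yields $a\in B(f^{kp}(\gamma))$ for every $k\geq 0$. But $f^{kp}(\gamma)$ collapses as $k\to\infty$ onto a single point: the attracting fixed point inside $U_n$, a parabolic boundary fixed point of $U_n$, or (in the Baker case) $\infty$ or a pre-pole of $\infty$. None of these can equal $a$, since $a\in\mathcal{J}(f)$ is not a boundary point of any Fatou component (buried) and is not a pre-pole (Corollary~\ref{cor-singleton}). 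This contradiction forces $U$ to be wandering. Moreover, for such wandering $U$, repeating the lemma along the orbit traps $a$ inside $B(f^{n}(\gamma))$ whenever $f^{n}(\gamma)$ is non-contractible in $U_n$; the bounded regions $B(f^{n}(\gamma))$ must shrink along a subsequence (using the wandering-domain constant-limit property and spherical-area considerations), so $f^{n_k}|_U\to a$ locally uniformly for some $n_k\to\infty$, establishing the subsequential-limit statement.

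Suppose now a multiply connected wandering domain $W$ exists with subsequence $f^{n_k}|_W \to a$. If the forward orbit of $a$ were finite, $a$ would be pre-periodic, and since $a$ has no $f$-preimage (being omitted) it cannot lie on the periodic part; hence $f^m(a)=c$ for some $m\geq 1$ with $c\in\mathcal{J}(f)$ periodic (necessarily repelling or rationally indifferent). Iterating gives $f^{n_k+m}|_W\to c$ locally uniformly. Applying the topological lemma to $f^{n_k+m}(\gamma)\subset W_{n_k+m}$, whenever these curves are non-contractible, yields $a\in B(f^{n_k+m}(\gamma))$; but uniform convergence near $c$ forces these curves, and hence their bounded complementary regions, to collapse onto $c$, placing $a$ arbitrarily close to $c$ and therefore giving $a=c$, which contradicts $a$ being non-periodic. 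Handling the alternative case, where some $f^{n_k+m}(\gamma)$ becomes null-homotopic in $W_{n_k+m}$, requires choosing $\gamma$ so as to enclose enough of the postsingular structure inside $W$ to guarantee non-contractibility under iteration; this is the technical heart of the argument and the step I expect to be the main obstacle.

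Finally, for the density of singleton buried components, pick any $n\geq 1$, so $f^n(a)\in\mathcal{J}(f)$ has non-empty full backward orbit, which is dense in $\mathcal{J}(f)$ by the standard fact for $f\in M$. Near each non-critical point $w$ in this backward orbit, $f^k$ is a local homeomorphism identifying a neighborhood of $w$ with a neighborhood of $f^n(a)$; since $a$ has no preimages at all (and hence no critical preimages), the buried-singleton structure transports from $\mathcal{J}_a=\{a\}$ to $\mathcal{J}_{f^n(a)}$ and then along generic backward branches to $\mathcal{J}_w$. Discarding the at most countable exceptional set of branches passing through critical points still yields density, which completes the proof.
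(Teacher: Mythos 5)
Your treatment of the first claim is essentially the paper's: surround $a=O_f$ by curves $\gamma_{n_k}\subseteq f^{n_k}(\gamma)$ via Lemma~\ref{general1}, show the constant limit of $f^{n_k}$ on $W$ can only be $a$ (finite constants $c\neq a$ are excluded because the shrinking curves would eventually fail to enclose $a$, and $c=\infty$ is excluded by the tract argument of Case~II of Lemma~\ref{general3}), and then note that $a$ being a constant limit function on a \emph{periodic} cycle would force $a$ to be an attracting/parabolic point, a boundary point of a periodic component, or a pre-pole --- all incompatible with $\mathcal{J}_a$ being a buried singleton and with Corollary~\ref{cor-singleton}(i). The density argument at the end (transporting the buried-singleton property along the grand orbit of $a$ and using density of backward orbits) also matches the paper's, which simply observes that every point of the grand orbit of $a$ is a singleton buried component.

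The genuine gap is in the middle step, and it is exactly the one you flag yourself: proving that the forward orbit of $a$ is infinite. Your plan is to push the enclosing curves forward by $m$ more iterates and conclude $a=c$ for the periodic point $c=f^m(a)$, but there is no control guaranteeing that a closed curve in $f^{n_k+m}(\gamma)$ still encloses $a$: Lemma~\ref{general1} only re-encloses $O_f$ after the curve has first swallowed a pole, and since $f^{n_k}(\gamma)$ is collapsing onto the tiny neighborhood of $a$ (whose finite forward orbit avoids the poles), the number of extra iterates needed to meet a pole is uncontrolled and need not be $m$. Choosing $\gamma$ "to enclose enough of the postsingular structure" does not obviously repair this. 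The paper avoids the issue entirely by invoking Proposition~1 of \cite{zheng1}: a finite constant limit function of $\{f^{n_k}\}$ on a wandering domain whose forward orbit is finite must be a pre-pole. Applied to the limit $a$, this contradicts Corollary~\ref{cor-singleton}(i) (buried $\mathcal{J}_a$ implies $a$ is not a pre-pole), so the forward orbit of $a$ is infinite. Without this external input (or a complete substitute for it), your argument for the second claim does not close.
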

For functions with two omitted values, if at least one omitted value is in the Julia set
 then each multiply connected Fatou
 component (if it exists) ultimately lands on a Herman ring. The same is also true when a function has only one
  omitted value and the component of the Julia set containing this value is nonempty and not a singleton
  (see Theorem 1).
  On the other hand,
  if there is only one omitted value of a function and the component of the Julia set containing this value is
  a singleton, then each multiply connected Fatou component is either wandering or eventually becomes
  a Herman ring or an infinitely connected Baker domain of period greater than 1.
A multiply connected Fatou component (if it exists)  ultimately lands
on a Herman ring or on a Fatou component containing all the omitted
values whenever all the omitted values are in the Fatou set. As
evident from the aforementioned theorems, there are situations in
which multiply connected Fatou components do not really occur.
\par
The conclusion of the next result is known for all transcendental
meromorphic functions with finitely many poles \cite{transdom98}.

\begin{cor}
 \label{nottotallydisconnected}
Let $f \in M_o$. If $O_f$ is not contained in a single Fatou
component of $f$, then  $\mathcal{J}(f)$ is not totally
disconnected.
\end{cor}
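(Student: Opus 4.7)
The plan is to argue by contradiction. Suppose that $\jl$ is totally disconnected. The classical topological fact that a compact totally disconnected subset of $\hC$ cannot separate it then forces $\ft$ to be connected, so the Fatou set consists of a single component $U$. Because $f(U) \subset \ft = U$ and $f^{-1}(U) \subset \ft = U$, this $U$ is completely invariant.

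From $\ft = U$ together with the hypothesis that $O_f$ is not contained in any single Fatou component, I deduce $O_f \not\subset U$, so there exists $a \in O_f \cap \jl$. I would then split into two cases according to which earlier result applies to the pair $(f,a)$.

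If $f \in M_o^1$ and $|\mathcal{J}_a| = 1$, the second part of Corollary~\ref{cor-singleton} rules out the existence of a completely invariant Fatou component, immediately contradicting the complete invariance of $U$. In every remaining case---either $f \in M_o^2$, or $f \in M_o^1$ with $|\mathcal{J}_a| > 1$---Theorem~\ref{sc1} applies and tells us that every Fatou component, in particular $U$, is SCH. Since $U$ is invariant we have $U_n = U$ for all $n$, so the SCH definition collapses to $U$ being simply connected or itself a Herman ring. In the simply connected case $\jl = \hC \setminus U$ is connected, so it has a single component; in the Herman ring case $U$ has connectivity~$2$ and hence $\jl$ has exactly two components. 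Neither option is compatible with $\jl$ being a nonempty perfect---and therefore uncountable---totally disconnected space, in which every component is a singleton and the number of components is uncountable, yielding the desired contradiction.

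The one point requiring care is setting up the case split: Theorem~\ref{sc1} and the second part of Corollary~\ref{cor-singleton} must together exhaust every way in which an omitted value of $f$ can lie in $\jl$, and I would verify this alignment before writing the proof. The remaining ingredients---the no-separation fact for totally disconnected compacta in $\hC$, the deduction of complete invariance from uniqueness of the Fatou component, and the perfectness of $\jl$---are standard.
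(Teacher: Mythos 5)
Your proof is correct, but it takes a genuinely different route from the paper's. The paper argues directly: if every Fatou component is simply connected then $\mathcal{J}(f)$ is connected, hence (being perfect) not totally disconnected; otherwise some component $U$ is multiply connected, Theorems~\ref{sc1}, \ref{twofatoucomponent}, \ref{singleton-notburied} and \ref{singleton-buried} show that $U$ is wandering or lands on a Herman ring or a Baker domain of period greater than one, so $U$ is not completely invariant, a second Fatou component therefore exists, and the complementary component of $U$ containing it supplies a non-degenerate continuum in $\partial U \subset \mathcal{J}(f)$. You instead argue by contradiction through the classical fact that a compact totally disconnected subset of $\widehat{\mathbb{C}}$ does not separate it, which collapses $\mathcal{F}(f)$ to a single completely invariant component $U$ with $O_f \cap \mathcal{J}(f) \neq \emptyset$; Corollary~\ref{cor-singleton}(ii) then disposes of the case $f \in M_o^1$ with $|\mathcal{J}_a|=1$, and in every remaining case Theorem~\ref{sc1} makes $U$ SCH, hence (being invariant) simply connected or a Herman ring, so $\mathcal{J}(f)$ has at most two components, contradicting perfectness together with total disconnectedness. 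Your case split is exhaustive (under total disconnectedness the subcase $|\mathcal{J}_a|>1$ is in fact vacuous), and you never need Theorem~\ref{twofatoucomponent}, since a lone Fatou component cannot meet $O_f$ in two distinct components. What your route buys is that it replaces the paper's unproved assertion that ``the boundary of $U$ has a non-singleton component'' by a clean reduction to a single completely invariant component; the price is the extra piece of plane topology (non-separation by totally disconnected compacta), which the paper needs only in the weaker form ``all components simply connected implies $\mathcal{J}(f)$ connected''. A small simplification: in the Herman ring case you can conclude at once from the fact, stated in the paper, that a Herman ring is never completely invariant.
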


\begin{proof}

If all the Fatou components are simply connected, then
$\mathcal{J}(f)$ is connected and the claim follows trivially.
Suppose $\mathcal{F}(f)$ has at least one multiply connected
component $U$. Then $U$ is either a wandering domain or lands on $V$
where $V$ is a Herman ring or a Baker domain of period greater than
$1$ by Theorems~(\ref{sc1}), (\ref{twofatoucomponent}),
(\ref{singleton-notburied}) and (\ref{singleton-buried}). Therefore,
$U$ cannot be completely invariant and $\mathcal{F}(f)$ has at least
one component different from $U$. The boundary of $U$ has a
non-singleton component and thus, $\mathcal{J}(f)$ is not totally
disconnected.
\end{proof}
For a function  $f \in M_o$ not satisfying the assumption of the
above corollary, everything is possible regarding the connectivity
of the Julia set as described by the following examples.
\begin{example}
\label{nottotallydisconnected-rem} Let $M_o^*$ be the class of all
meromorphic functions $f$ in $M_o$ with $O_f$ contained in a single
Fatou component of $f$. Then,
\begin{enumerate}

\item There exists a function $f_1 \in M_o^*$ such that
$\mathcal{J}(f_1)$ is disconnected but not totally disconnected.
This can be seen by taking $f_1(z)=\lambda (e^z +1+\frac{1}{e^z
+1})$, $0< \lambda < \lambda^*$ where $\lambda^*$ is as defined in
 \cite{tk-thesis}. This function has a single omitted value $2
\lambda$ and it is proved in \cite{tk-thesis} that the Fatou set is
an infinitely connected attracting domain containing $2 \lambda$ and
the Julia set is not totally disconnected.

\item There exists a function $f_2 \in M_o^*$ such that
$\mathcal{J}(f_2)$ is totally disconnected. This can be seen by
taking $f_2(z)=\lambda \tan z$, $-1 < \lambda < 1$. In this case,
$\mathcal{F}(f_2)$ is connected and contains $O_{f_2}=\{i
\lambda,~-i \lambda\}$ but $\mathcal{J}(f_{\lambda})$ is a totally
disconnected set \cite{tanlkjk97}.

\item There exists a function $f_3 \in M_o^*$ such that
$\mathcal{J}(f_3)$ is connected. By taking $f_3(z)=\lambda
\frac{z^m}{\sinh^m}$, $m$ or $\frac{m}{2}$ is an odd natural number
and $\lambda$ is any non-zero real number, it is observed that
$O_{f_3}=\{0\}$. A critical parameter $\lambda^*>0$ is found in
 \cite{tkmgpp-unboundedsingular} such that for $|\lambda|>
\lambda^*$, $\mathcal{F}(f_3)$ is the basin of attraction or
parabolic basin corresponding to a $2$-periodic point and $0 \in
\mathcal{F}(f_3)$. Further, it is proved that all the Fatou
components are simply connected which means that $\mathcal{J}(f_3)$
is connected.
\end{enumerate}

\end{example}
 The following result on singleton components of the Julia set is
 proved by Dom\'{i}nguez in \cite{transdom98}.
\begin{theorem*}[A]
 Let $f$ be a transcendental meromorphic
function satisfying one of the following.
\begin{enumerate}
\item
 $\mathcal{F}(f)$ has a component with connectivity at least
$3$.
\item $\mathcal{F}(f)$ has three doubly connected components
$U_i,~i=1,~2,~3$ such that either (a) each component lies in the
unbounded component of the complement of the other two or (b) two of
the components $U_1,~U_2$ lie in the bounded component of $U_3^{c}$
but $U_1$ lies in the unbounded component of $U_2^{c}$ and $U_2$
lies in the unbounded component of $U_1^{c}$.
\end{enumerate} Then,
singleton components are dense in $\mathcal{J}(f)$.
\label{singleton-dense-dominguez}
\end{theorem*}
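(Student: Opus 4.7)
My plan is to unify both hypotheses (1) and (2) by producing, in each case, a Jordan curve $\gamma \subset \mathcal{F}(f)$ whose bounded complementary region $B(\gamma)$ contains at least two disjoint non-degenerate continua $K_1, K_2$ of $\mathcal{J}(f)$, and then pulling this configuration back through the iteration to realize singleton Julia components arbitrarily close to any prescribed point of $\mathcal{J}(f)$. Under (1), a Fatou component $U$ with $c(U) \geq 3$ forces $\widehat{\mathbb{C}} \setminus U$ to have at least three components, hence at least two bounded ones which are Julia continua $K_1, K_2$; take $\gamma \subset U$ close to $\partial U$ enclosing $K_1 \cup K_2$ while separating them from the other boundary components (and from $\infty$ when $U$ is unbounded). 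Under (2)(a), the three mutually external doubly connected components $U_1, U_2, U_3$ bound three pairwise disjoint Julia continua $C_1, C_2, C_3$, namely the bounded complementary pieces of $U_1, U_2, U_3$; let $\gamma \subset \mathcal{F}(f)$ be a Jordan curve encircling $\overline{U_1} \cup \overline{U_2}$ while excluding $U_3$, so that $B(\gamma) \supset C_1 \cup C_2$. Under (2)(b), let $\gamma$ be a Jordan curve inside the annulus $U_3$ close to its inner boundary, enclosing the whole bounded complement of $U_3$ and in particular $U_1, U_2$ with their Julia continua $C_1, C_2$.

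Next, fix $z_0 \in \mathcal{J}(f)$ and $\epsilon > 0$. Since backward orbits of non-exceptional points are dense in $\mathcal{J}(f)$, I choose a point $p$ with $f^k(p) = w$ for some $k \geq 1$ and some fixed $w \in K_1$, with $|p - z_0| < \epsilon/2$ and with the forward orbit $p, f(p), \ldots, f^{k-1}(p)$ avoiding critical points of $f$ (permissible since critical points form a countable set). A univalent branch $g$ of $f^{-k}$ sending a neighborhood of $w$ to a neighborhood of $p$ is then defined; after enlarging this neighborhood to contain $\gamma$ (or first composing finitely many pullbacks to reach $\gamma$), the curve $\gamma^{(1)} := g(\gamma) \subset \mathcal{F}(f)$ is a Jordan curve of diameter $< \epsilon$ near $p$ whose bounded complement contains the two disjoint Julia continua $g(K_1), g(K_2)$. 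Iterating this construction inside each $B(\gamma^{(n)})$---which always meets $\mathcal{J}(f)$---produces a nested sequence of Fatou Jordan curves $\gamma^{(n)}$ with $\overline{B(\gamma^{(n+1)})} \subset B(\gamma^{(n)})$, each $B(\gamma^{(n)})$ containing two disjoint Julia continua, and $\mathrm{diam}\,\gamma^{(n)} \to 0$ by a Montel/Koebe shrinking argument (the composed inverse branches omit $K_1, K_2$ in the image and so form a normal family whose limits are necessarily constant). The intersection $\bigcap_n \overline{B(\gamma^{(n)})}$ is then a single point $z^* \in \mathcal{J}(f)$ within $\epsilon$ of $z_0$; since every $\gamma^{(n)}$ is a Fatou Jordan curve separating $z^*$ from the outside of $B(\gamma^{(n)})$, the Julia component of $z^*$ equals $\{z^*\}$, establishing density.

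The main obstacle I anticipate is Step 1 in sub-case (2)(a): constructing a Jordan curve in $\mathcal{F}(f)$ that separates two of the three mutually external doubly connected components from the third is a nontrivial topological task because $\mathcal{F}(f)$ need not be path-connected. One must first realize the desired separation by a curve in the open set $\widehat{\mathbb{C}} \setminus \overline{U_1 \cup U_2 \cup U_3}$ and then perturb it locally to remain in $\mathcal{F}(f)$, using that $\mathcal{F}(f)$ is dense in $\mathbb{C}$ (the case $\mathcal{J}(f) = \widehat{\mathbb{C}}$ is excluded by the existence of Fatou components in the hypothesis) and that any Julia continuum the curve encounters can be circumvented by a small homotopy. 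A secondary technical point is the uniform shrinking of pull-back diameters, which is handled by Koebe distortion together with the normality ensured by omission of $K_1, K_2$ in the image of the composed inverse branches.
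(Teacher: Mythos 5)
First, a point of comparison: the paper does not prove this statement at all --- it is quoted as Theorem~A of Dom\'{i}nguez \cite{transdom98} and used as a black box in the proof of Theorem~\ref{singleton-buried-dense-tk} --- so there is no internal proof to measure yours against. Your overall strategy (produce a Jordan curve $\gamma\subset\mathcal{F}(f)$ whose bounded complementary domain contains two separated pieces of $\mathcal{J}(f)$, then pull this configuration back to obtain arbitrarily small nested Fatou curves around prescribed Julia points) is the standard one for density-of-singleton-components results, but as written the pullback step has genuine gaps.

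The main problem is the construction of $\gamma^{(1)}=g(\gamma)$. Choosing the backward orbit of a single point $p$ to avoid critical points only yields a branch of $f^{-k}$ near $w$; to pull back the whole curve $\gamma$, together with $K_1$ and $K_2$ inside it, you need a single-valued analytic continuation of that branch over a neighborhood of $\overline{B(\gamma)}$, and this is obstructed by critical values of $f^{k}$ and, for transcendental meromorphic $f$, by asymptotic values lying in $B(\gamma)$ --- an unavoidable difficulty here, since $B(\gamma)$ contains Julia continua and hence typically carries singular values of the iterates. ``Enlarging the neighborhood'' does not address this. Second, the shrinking argument is unjustified: a component of $f^{-n}(B(\gamma))$ has no reason to omit $K_1\cup K_2$ (preimages of the Julia points inside $B(\gamma)$ may perfectly well land on $K_1\cup K_2$), so Montel does not apply as you claim, and even granting normality you have not shown that every limit function is constant. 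The standard repairs are either to work with full preimage components of two disjoint Jordan domains with Fatou boundaries and control their size by a nested-annuli (Gr\"{o}tzsch modulus) estimate, or to apply normal-family arguments to $\{f^{n}\}$ on the complement of the two continua. Two smaller inaccuracies: a bounded complementary component of a Fatou component $U$ is a continuum whose boundary lies in $\mathcal{J}(f)$, but it need not itself be a Julia continuum (it may contain further Fatou components); and in case (2)(a) the proposed ``small homotopy around any Julia continuum the curve encounters'' fails when that continuum separates the plane or is unbounded --- the natural fix is to use the core curves of the three doubly connected components directly rather than a single curve enclosing two of them.
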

 Ng et al. \cite{zheng8} proved a generalization as follows.
\begin{theorem*}[B]
 Let $f$ be a meromorphic function that is not of the
form $\alpha+(z-\alpha)^{-k}e^{g(z)}$, where $k$ is a natural
number, $\alpha$ is a complex number and $g$ is an entire function.
Then $\mathcal{J}(f)$ has buried components if $f$ has no completely
invariant Fatou components and its Julia set is disconnected. Moreover, if
$\mathcal{F}(f)$ has an infinitely connected component, then the
singleton buried components are dense in $\mathcal{J}(f)$.
\label{singleton-buried-dense-zheng}
\end{theorem*}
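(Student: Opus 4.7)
The plan is to handle the two conclusions separately, since the density statement is a substantial refinement of the existence statement. Both rely on the following dichotomy: outside the excluded family $\alpha+(z-\alpha)^{-k}e^{g(z)}$, iterated preimages of every non-exceptional point are dense in $\mathcal{J}(f)$; inside that family the single pole coincides with the unique omitted value $\alpha$, so the backward orbit of $\alpha$ is $\{\alpha\}$ and the usual pullback machinery degenerates. Ruling out these functions is precisely what makes backward-orbit density available throughout the argument.

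For the existence of a buried component, I would start with a component $K_0$ of $\mathcal{J}(f)$ together with a Jordan curve $\gamma\subset\mathcal{F}(f)$ separating $K_0$ from $\mathcal{J}(f)\setminus K_0$, which exists because $\mathcal{J}(f)$ is disconnected and its components are compact. The no-completely-invariant-Fatou-component hypothesis (combined with the exclusion of the exceptional form) forces $\mathcal{J}(f)\setminus\partial V$ to be nonempty for every Fatou component $V$. Combining this with backward-orbit density, I would inductively pull $\gamma$ back to obtain, for any prescribed finite list $V_1,\ldots,V_m$ of Fatou components, a Jordan curve $\gamma_n\subset\mathcal{F}(f)$ enclosing a component $K_n$ of $f^{-n}(K_0)$ and missing $\partial V_1\cup\cdots\cup\partial V_m$. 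A diagonal enumeration over the countable list of Fatou components then yields a pullback $K_n$ disjoint from every Fatou boundary, hence buried.

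For the density of singleton buried components, let $U$ be an infinitely connected Fatou component and denote by $\{E_i\}$ the components of $\widehat{\mathbb{C}}\setminus U$. These form a disjoint family of compacta meeting $\mathcal{J}(f)$ whose spherical diameters accumulate at zero, so along a suitable subsequence $E_{i_k}$ one obtains a shrinking nested geometry in $\mathcal{J}(f)$. Given an arbitrary open set $W$ meeting $\mathcal{J}(f)$, I would invoke backward-orbit density to choose $n$ and a univalent branch of $f^{-n}$ carrying a small disk (intersecting the $E_{i_k}$-chain) into $W$; Koebe-type distortion preserves the relative scales. Intersecting the shrinking transported pockets produces a single point of $\mathcal{J}(f)\cap W$, and interleaving with the ``avoid $\partial V_1,\ldots,\partial V_m$'' construction from the first conclusion forces this point to be a singleton buried component.

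The crux of the argument is the interleaving in the density step. A naive shrinking of the $E_{i_k}$ collapses onto a point of $\partial U$, which is \emph{not} buried, so the pullback must simultaneously enforce both ``diameter $\to 0$'' and ``avoid every $\partial V_j$''. This is where the Baker--Dom\'{i}nguez observation that the residual Julia set $\mathcal{J}_r(f)$ is residual in the Baire sense is essential: the countable intersection of the ``avoid $\partial V_j$'' conditions is dense in every open subset of $\mathcal{J}(f)$, so the Baire-category diagonal can be executed inside the shrinking pullback. The remainder is standard pullback-and-shrink bookkeeping of the type already employed in the proofs of Theorems~\ref{singleton-notburied} and~\ref{singleton-buried}.
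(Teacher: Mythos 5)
First, a point of order: the paper does not prove this statement. Theorem~B is quoted verbatim from Ng, Zheng and Choi \cite{zheng8} and is used as a black box in the proofs of Theorems~\ref{singleton-notburied} and \ref{singleton-buried-dense-tk}, so there is no in-paper proof to compare against. Judged on its own merits, your proposal has a genuine gap at its central step. You assert that the absence of a completely invariant Fatou component ``forces $\mathcal{J}(f)\setminus\partial V$ to be nonempty for every Fatou component $V$.'' That implication is precisely the hard content of the theorem (it is the meromorphic analogue of one direction of Makienko's conjecture, which the paper itself flags as open for rational maps): a Fatou component with $\partial V=\mathcal{J}(f)$ is not obviously completely invariant, and ruling it out is where the exclusion of the family $\alpha+(z-\alpha)^{-k}e^{g(z)}$ and the disconnectedness of $\mathcal{J}(f)$ actually get used, via a nontrivial pullback analysis in \cite{zheng8}. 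Without this step your inductive ``avoid $\partial V_1,\dots,\partial V_m$'' construction cannot even begin, and your closing appeal to Baire category is circular: the residuality of $\mathcal{J}_r(f)$ is equivalent to the density of each $\mathcal{J}(f)\setminus\partial V_j$ in $\mathcal{J}(f)$, which is exactly what is in question. A secondary issue in the diagonalization is that the conditions are achieved at different pullback depths, and the limiting continuum of a nested sequence, each member of which avoids only finitely many boundaries, need not avoid all of them; also a component of $f^{-n}(K_0)$ need not be a component of $\mathcal{J}(f)$.

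The density argument has independent problems. The complementary components of an infinitely connected domain need not have spherical diameters accumulating at zero (infinitely many disjoint continua of uniformly large diameter can occur as complementary components of a connected open set), so the ``shrinking nested geometry'' is not available for free. Moreover, univalent branches of $f^{-n}$ with Koebe-type distortion control are not generally available for transcendental meromorphic functions on disks meeting $\mathcal{J}(f)$, because of critical points, asymptotic values and the essential singularity. The actual mechanism in Dom\'{i}nguez's Theorem~A and in \cite{zheng8} is to produce nested separating continua in the Fatou set whose successive preimages nest, and then to show that the intersection is a singleton component of $\mathcal{J}(f)$ by an expansion argument; your proposal substitutes distortion estimates that do not apply in this generality.
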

 Using the above two results, we
give a necessary and sufficient condition for existence of singleton
buried components in $\mathcal{J}(f)$ for $f \in M_o^2$.

 \begin{theorem}
\label{singleton-buried-dense-tk} Let $f \in M_o^2$. Then, the
singleton buried components are dense in $\mathcal{J}(f)$ if and
only if $\mathcal{F}(f)$ has no completely invariant component and
$\mathcal{J}(f)$ is disconnected.
\end{theorem}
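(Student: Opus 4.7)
The proof splits naturally into necessity and sufficiency.

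For necessity, I would observe that if $f$ admitted a completely invariant Fatou component $U$, then $\partial U=\mathcal{J}(f)$, so every component of $\mathcal{J}(f)$ would touch $\partial U$ and nothing could be buried. Likewise, if $\mathcal{J}(f)$ were connected it would be its own sole component; since the Julia set is perfect it is not a singleton, so no singleton component could exist. Either conclusion contradicts density (and in particular existence) of singleton buried components.

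For sufficiency, the plan is to apply Theorem~B. The excluded form $\alpha+(z-\alpha)^{-k}e^{g(z)}$ has only the single omitted value $\alpha$, whereas $f\in M_o^2$ has two; hence $f$ is outside the excluded class. With the assumptions ``no completely invariant Fatou component'' and ``$\mathcal{J}(f)$ disconnected'' at hand, Theorem~B gives that $\mathcal{J}(f)$ has buried components. To upgrade this to density of \emph{singleton} buried components I would invoke the ``moreover'' clause of Theorem~B, which requires an infinitely connected component of $\mathcal{F}(f)$.

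Producing an infinitely connected Fatou component is the heart of the argument. Disconnectedness of $\mathcal{J}(f)$ forces a multiply connected Fatou component $U$. I would then apply Theorems~\ref{sc1}, \ref{twofatoucomponent} and \ref{onefatoucomponent} to the two-omitted-value function $f$: when $O_f\cap\mathcal{J}(f)\neq\emptyset$, Theorem~\ref{sc1} places every Fatou component in the SCH class, so $U$ eventually lands on a Herman ring; when the two omitted values lie in distinct Fatou components (Theorem~\ref{twofatoucomponent}), cases (a) and (b) would force all components simply connected (contradicting disconnectedness of $\mathcal{J}(f)$) and case (c) is impossible, leaving cases (d)/(e) in which $U$ is again SCH; when both omitted values lie in a single Fatou component $W$ (Theorem~\ref{onefatoucomponent}), the periodic subcase (v) directly gives $c(W_n)\in\{1,\infty\}$, and if every $W_n$ were simply connected then parts (i)--(iv) would combine to force all Fatou components SCH, so multiply connected ones would again land on a Herman ring. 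In the subcase where some $W_n$ is infinitely connected we are done. In the remaining ``Herman ring'' subcases, I would exploit the two direct transcendental singularities of $f^{-1}$ over $O_f$ together with the Riemann--Hurwitz formula along the backward orbit of the Herman ring to produce a Fatou component of infinite connectivity.

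The principal obstacle is this last step: converting the presence of two omitted values into infinite branching of the inverse map on some backward iterate of the Herman ring, thereby forcing infinite connectivity of a Fatou component. Once this infinitely connected Fatou component is in hand, the moreover clause of Theorem~B delivers density of singleton buried components in $\mathcal{J}(f)$, completing the proof.
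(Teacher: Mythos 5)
Your necessity direction is fine and matches the paper's (one-line) argument. The gap is exactly where you flag it: producing an infinitely connected Fatou component so that the ``moreover'' clause of Theorem~B applies. This step is not just unproved in your sketch --- the route you propose is unlikely to work, and in fact no infinitely connected Fatou component need exist. Your classification via Theorems~\ref{sc1}--\ref{onefatoucomponent} drives every multiply connected component onto a Herman ring, which is doubly connected; an omitted value can never lie in a Herman ring $H$ (each $f:H_j\to H_{j+1}$ in the cycle is surjective), so a component $G$ of $f^{-1}(H)$ is an island or a tongue over a domain of connectivity $2$, and Theorem~D gives $c(G)=\infty$ only when the target has connectivity $>2$. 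Riemann--Hurwitz along the backward orbit of $H$ therefore yields no infinite branching, and it is consistent with your case analysis that every multiply connected Fatou component is doubly connected. Relying solely on Theorem~B's ``moreover'' clause is thus a dead end.

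The paper sidesteps this by making the infinitely connected object a preimage of a \emph{disk}, not a Fatou component. By Lemma~\ref{general1} some $U_n$ contains a closed curve $\gamma$ with $O_f\subset B(\gamma)=:B$. A component $B_{-1}$ of $f^{-1}(B)$ cannot be an island (the two omitted values are never attained), so it is a tongue over $B\setminus O_f$, which has connectivity $3$; Theorem~D(ii) then forces $c(B_{-1})=\infty$. The bounded boundary components $\gamma_i$ of $B_{-1}$ are continua lying in the Fatou set (they map into $\gamma$), each shown to separate the plane and to surround a pole, hence Julia points. This produces either an infinitely connected Fatou component or infinitely many Fatou components of connectivity at least $2$ in the configuration required by Theorem~A, which already yields dense \emph{singleton} components of $\mathcal{J}(f)$. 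The proof then closes with a dichotomy you are missing: either all these singletons are buried (done), or one lies on the boundary of a Fatou component, which is then infinitely connected and Theorem~B's ``moreover'' clause finishes. To repair your argument you would need both the tongue-over-$B\setminus O_f$ construction (this is the real use of the two omitted values) and this final dichotomy; as written, the proposal does not establish sufficiency.
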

\begin{remark}
\begin{enumerate}
\item The above
result is similar to the so-called Makienko's
 conjecture, which states that the residual Julia set of a
rational function of degree at least two is empty if and only if the
Fatou set of
 $f$ has a completely invariant component or consists of only two components.
\item The proof  of
Theorem~\ref{singleton-buried-dense-tk} shows that the Julia set
contains singleton components whenever it is disconnected. This is
the conclusion of Theorem~A for all meromorphic functions when the
Julia set is disconnected in some specific ways not covering all
possibilities.

\item
Suppose $f \in M_o^2$ and $\mathcal{F}(f)$ has a Herman ring.
Existence of a completely invariant Fatou component $U$ would imply
that each Fatou component other than $U$ is simply connected. A
Herman ring is not completely invariant by definition and hence is
different from $U$. Thus, the Herman ring must be simply connected
which is a contradiction and we conclude that $\mathcal{F}(f)$ has
no completely invariant component. Since $\mathcal{J}(f)$ is
disconnected, by Theorem~\ref{singleton-buried-dense-tk}, singleton
buried components are dense in $\mathcal{J}(f)$ whenever $f \in
M_o^2$ and $\mathcal{F}(f)$ has a Herman ring.

\end{enumerate}
\end{remark}

\begin{cor}
Let $f \in M_o^2$. If $\mathcal{F}(f)$ has a completely invariant
component $V$ and $\mathcal{J}(f)$ is disconnected, then $V$ is the
only multiply connected Fatou component and $C(V)= \infty$. Further,
$O_f \subset V$.
 \label{bothomittedvaluesinU}
\end{cor}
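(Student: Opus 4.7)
The plan is to first pin down the structure of $V$ (it is the unique multiply connected Fatou component and has infinite connectivity) and then to force both omitted values into $V$ by eliminating the alternatives via Theorems~\ref{sc1}, \ref{twofatoucomponent} and \ref{onefatoucomponent}. The recurring mechanism throughout is that $V$ is invariant, so if it is multiply connected then $c(V)\in\{2,\infty\}$; since $c(V)=2$ would make $V$ a Herman ring, which is never completely invariant by definition, we must have $c(V)=\infty$ whenever $V$ is multiply connected.

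As noted in Remark~3 following Theorem~\ref{singleton-buried-dense-tk}, complete invariance of $V$ forces every other Fatou component to be simply connected. Since $\mathcal{J}(f)$ is disconnected, there is a Jordan curve $\gamma\subset\mathcal{F}(f)$ with $\mathcal{J}(f)$-points on both sides; the Fatou component containing $\gamma$ must then be multiply connected, and must therefore coincide with $V$. Hence $V$ is the only multiply connected Fatou component, and by the dichotomy above, $c(V)=\infty$.

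To place $O_f$ inside $V$ I would argue by exclusion. First, $O_f\subset\mathcal{F}(f)$: if some $w\in O_f$ lay in $\mathcal{J}(f)$, then since $f\in M_o^2$ Theorem~\ref{sc1} would make every Fatou component SCH, and the SCH condition applied to the invariant multiply connected $V$ would force $V=V_{\bar n}$ to be a Herman ring, contradicting complete invariance. Next, if the two omitted values lay in two distinct Fatou components, Theorem~\ref{twofatoucomponent} gives a contradiction in each of its cases: (a) and (b) would make every Fatou component simply connected (impossible, as $c(V)=\infty$); (c) is explicitly excluded; (d) and (e) would again make every Fatou component SCH and hence $V$ a Herman ring. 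So $O_f$ lies in a single Fatou component $U$. If $U\neq V$, then $V_n=V\neq U$ for all $n$, and parts (i) and (ii) of Theorem~\ref{onefatoucomponent} applied to this pair yield either $c(V_n)=1$ (if $U$ is unbounded) or $V$ SCH (if $U$ is bounded); both are contradictions. Hence $U=V$ and $O_f\subset V$.

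The main obstacle is keeping the case analysis organized; every branch of the argument reduces to the same underlying fact, namely that a completely invariant multiply connected Fatou component cannot be SCH, because being SCH together with invariance would force the component itself to be a Herman ring.
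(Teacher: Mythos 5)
Your proposal is correct and follows essentially the same route as the paper's own proof: deduce uniqueness and $c(V)=\infty$ from complete invariance plus the Herman-ring exclusion, then rule out $O_f$ meeting the Julia set (Theorem~\ref{sc1}) and $O_f$ meeting two components (Theorem~\ref{twofatoucomponent}) via the observation that a multiply connected completely invariant component cannot be SCH, and finally force $U=V$ with Theorem~\ref{onefatoucomponent}(i)--(ii). The only difference is that you spell out the five cases of Theorem~\ref{twofatoucomponent} explicitly where the paper compresses them into one sentence.
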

\begin{proof}
A disconnected Julia set implies the existence of at least one
multiply connected Fatou component. The first part of the corollary
is a consequence of the fact that if $\mathcal{F}(f)$ has a
completely invariant component then each of its other components, if
such exist, is simply connected. Now, $c(V)=\infty$ follows from the
fact that $V$ is not a Herman ring. If $O_f$ intersects the Julia
set, then $V$ is SCH by Theorem~\ref{sc1}.
Theorem~\ref{twofatoucomponent} gives that $V$ is SCH whenever the
set $O_f$ intersects two Fatou components. Note that a multiply
connected completely invariant Fatou component, in particular $V$,
cannot be SCH. Thus,  $O_f $ is contained in a single Fatou
component, say $U$. If $U \neq V$ then $V_n=V \neq U$ for all $n$.
By Theorem~\ref{onefatoucomponent}(i) and (ii), either $c(V_n)=1$
for all $n$ or $V$ is SCH. As just observed, none of this can be
true. Therefore, $U=V$ and $O_f \subset U$ as desired.
\end{proof}
\begin{example}
\label{bothomittedvaluesinU-rem} The assumption of
Corollary~\ref{bothomittedvaluesinU} is not always true. In other
words, there are functions $g$ in $M_o^2$ for which $\mathcal{F}(g)$
has a completely invariant simply connected component and
consequently, has a connected Julia set. An example is $g(z)=\lambda
+\tan z,~ \lambda \in \mathbb{C}$, which has two omitted values
$\lambda + i$ and $\lambda - i$ and hence is in $M_o^2$. For each
$\lambda $ with $\Im(\lambda)>0$, it is seen that the upper half
plane is contained in a completely invariant attracting domain, say
$U$ \cite{tk-thesis}. By choosing $\lambda=i + \frac{\pi}{2}$ (any
other pole of $\lambda + \tan z$ can be taken in place of
$\frac{\pi}{2}$), we observe that $\frac{\pi}{2}$ is a pole as well
as an omitted value of $g$, i.e., $O_{g} \bigcap \mathcal{J}(g) \neq
\emptyset$. Applying Theorem~\ref{sc1}, we have $c(U)=1$ and
consequently, the Julia set is connected.
\end{example}

Conjecturally, the number of completely invariant Fatou components
of a meromorphic function is at most two. This has been proved to be
true for rational functions, transcendental entire functions and
transcendental meromorphic functions of finite type (those with
finitely many singular values). In what follows, we confirm this for
most of the functions in $M_o$. Note that the functions in the class
$M_o$
 are not necessarily of finite type.

\begin{theorem}
 \label{cifc}
Let $M_{cv}=\{f \in M~:~\mbox{ $f$ has at least one critical
value}\}$ and $CIFC_f$ denote the set of all completely invariant
Fatou components of $f$.
\begin{enumerate}

\item If $f \in M_o^2$, then $|CIFC_f| \leq 2$.

\item If $f \in M_o^2 \bigcap M_{cv}$, then $|CIFC_f| \leq 1$.
\item If $f \in M_o^1 \bigcap M_{cv}$, then $|CIFC_f| \leq 2$.
\end{enumerate}
\end{theorem}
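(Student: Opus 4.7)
The plan is to argue by contradiction, combining the structural classification of Fatou components in Theorems~\ref{sc1}--\ref{singleton-buried} with two classical facts about a completely invariant Fatou component (CIFC) $U$ of a transcendental meromorphic $f$. Firstly, $\partial U = \mathcal{J}(f)$ and $U$ can only be an attracting, parabolic, or Baker domain, since Siegel disks and Herman rings demand $f|_U$ to be injective, in conflict with the infinitude of preimages of a generic point supplied by Picard's theorem together with complete invariance. Secondly, $U$ must contain a singular value of $f$ in its interior: if $U$ were simply connected and carried no singular value, then $f|_U\colon U \to U$ would be an unramified covering of a simply connected domain by itself, hence a biholomorphism, so a generic $z\in U$ would have exactly one preimage; but $f^{-1}(U)=U$ forces all preimages to lie in $U$, and transcendentality yields infinitely many, a contradiction. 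Multiply connected CIFCs are ruled out (in the situations of Theorems~\ref{sc1}--\ref{twofatoucomponent}) by the SCH classification: an invariant SCH component of connectivity greater than one would itself have to be a Herman ring, which is not completely invariant. In particular, distinct CIFCs harbour distinct singular values in their interiors.

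For part~(1), suppose $f \in M_o^2$ admits three distinct CIFCs $U_1, U_2, U_3$. Then each contains a distinct singular value of $f$ in its interior. Since $|O_f| = 2$, at least one, say $U_3$, contains a non-omitted singular value. Using the representation $f(z)=S(\tanh(g(z)))$ from Section~1, the singular values of $f$ beyond $\{a,b\}=\{S(\pm 1)\}$ come from critical or asymptotic values of $g$, and I would analyze the asymptotic tracts ending in each $U_i$: the three components share the common boundary $\mathcal{J}(f)$, and I would show that three such components demand at least three mutually disjoint families of asymptotic tracts, incompatible with the two-omitted-value structure of $f$ afforded by the Section~1 representation (either by a direct tract-separation argument or via an Ahlfors covering-surface count). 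Parts~(2) and~(3) proceed in the same spirit with sharper counts. For (2), $f \in M_o^2\cap M_{cv}$: if two CIFCs exist, each contains (by the singular-value claim) one of the omitted values, but a critical value must additionally lie in some CIFC, contributing a ramified sheet that the topology of two CIFCs with a common Julia boundary cannot absorb. For (3), $f \in M_o^1\cap M_{cv}$, writing $f=a+1/h$ with $h$ entire, the single omitted value combined with the extra ramification of a critical value restricts the CIFC count to at most two by the parallel counting.

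The main obstacle will be the rigorous tract-and-ramification bookkeeping: merely knowing that each CIFC harbours a distinct interior singular value does not by itself give the sharp bounds $\le 2,\ \le 1,\ \le 2$, because a function in $M_o^2$ can in principle have many non-omitted singular values coming from the factor $g$. The sharp counts must come from the very explicit forms $f=S(\tanh(g))$ and $f=a+1/h$ in Section~1, which constrain how asymptotic tracts and critical preimages can be distributed across the CIFCs, together with a Riemann--Hurwitz-type accounting on each $U_i$ (its connectivity forces ramification to balance the cover $f|_{U_i}\colon U_i\to U_i$). Matching this distribution to the correct upper bound in each of the three cases, and handling the residual possibility of multiply connected CIFCs falling outside the scope of Theorems~\ref{sc1}--\ref{twofatoucomponent}, is where I expect the essential work to lie.
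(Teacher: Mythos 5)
Your setup is sound as far as it goes: with more than one completely invariant component every such component is simply connected, cannot be a Siegel disk or Herman ring, and must contain a singular value (your covering-map argument for the last point is the standard one and is correct). But from that point on there is a genuine gap, and you have in fact put your finger on it yourself: knowing that distinct completely invariant components contain distinct singular values bounds nothing, because $f\in M_o$ need not be of finite type and can have infinitely many non-omitted singular values. The sharp counts do not come from tract separation or an Ahlfors covering count applied to the representations $f=S(\tanh g)$ or $f=a+1/h$; neither of the arguments you sketch for parts (1)--(3) is developed to the point where one could check it, and I do not see how the "three mutually disjoint families of asymptotic tracts" claim or the "ramified sheet that the topology cannot absorb" claim would be made rigorous.

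What the paper actually proves, and what your proposal is missing, is Lemma~\ref{lemma-completelyinvariant}: (i) for $f\in M_o^2$, a completely invariant component $U$ is simply connected \emph{if and only if} it contains exactly one omitted value; and (ii) for $f\in M_o\cap M_{cv}$, a simply connected completely invariant component contains either \emph{all} critical values or \emph{all} omitted values. These are the pigeonhole-enabling statements: part (1) of Theorem~\ref{cifc} follows because three disjoint components cannot each contain one of only two omitted values; part (2) because two disjoint components cannot each contain $CV_f$, and $O_f$ cannot sit in a single simply connected one by (i); part (3) similarly. The proof of the lemma is an analytic continuation argument: one continues a branch of $f^{-1}$ (via the Gross Star Theorem) along a Jordan curve through a point of $U$ winding once around an omitted value (resp.\ a critical value) lying outside $U$, closes the lifted arc inside $U$ using complete invariance, and derives a contradiction from the fact that an omitted value can never lie in the closure of $f(B)$ for a bounded domain $B$ (Remark~\ref{allmultiplyconnected}(iii)), with Bolsch's theorem ruling out $|U\cap O_f|=2$ for a simply connected $U$. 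Nothing resembling this winding/continuation mechanism appears in your proposal, and without it the counts $\leq 2$, $\leq 1$, $\leq 2$ are not reachable. (A minor additional point: your worry about multiply connected completely invariant components "outside the scope" of Theorems~\ref{sc1}--\ref{twofatoucomponent} is a non-issue; once two or more completely invariant components exist, each is simply connected because any other Fatou component of a map with a completely invariant component is simply connected.)
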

\begin{remark}
\label{cifc-rem}
  If $f \in M_o^{1} \setminus
 M_{cv}$ is of finite order, then a result
 of Bergweiler et al. \cite{bergere95} guarantees that it has at most
 finitely many asymptotic values. Consequently, $f$ is of finite type and the number of
  completely invariant Fatou components is at most two. The other case
when $f$ is of infinite order remains open.
\end{remark}
\begin{example}
\begin{enumerate}
\item
 In Theorem~\ref{cifc}(i), $|CIFC_f|$ can be $0$ ,
  $1$ or $2$. As shown in \cite{tanlkjk97}, for all $\lambda > 1$, $\mathcal{F}(\lambda \tan
z)$ is the union of two completely invariant components, namely the
upper and the lower half planes. The other two possibilities hold
for the function $\lambda \tanh(e^z)$ for suitable values of
$\lambda$. This function has two omitted  values $\lambda $ and
$-\lambda$ and hence is in $M_o^2$. A critical parameter $\lambda^*
>0$ is found in \cite{tkmgpp-tanhexp} such that $\mathcal{F}(\lambda
\tanh(e^z))$ is the basin of attraction or parabolic basin
corresponding to a $2$-periodic point for $\lambda < \lambda^*$. In
this case, each periodic component has at least a pre-image
different from itself and therefore, $\mathcal{F}(\lambda
\tanh(e^z))$ has no completely invariant component. For $\lambda
>\lambda^*$, it is proved that $\mathcal{F}(\lambda \tanh(e^z))$ is
a completely invariant attracting domain.

\item
 There
exists a meromorphic function without any omitted value but with
critical values such that it has two completely invariant Fatou
components. For example, the upper and lower half planes are
completely invariant Fatou components for $z +\tan z$. In this case,
there are no omitted values and all the critical values are in the
Fatou set. Thus Theorem~\ref{cifc} (ii) is not true in general for
functions without omitted values.

\end{enumerate}
\end{example}

It has been proved that multiply connected Fatou components are
Herman rings or their pre-images in most cases. We prove mild
restrictions on the possibility of these domains.
\begin{theorem}
Let $f \in M_o$. If $H$ is a $p$-periodic Herman ring of $f$, then
 the bounded component of $H^c$ contains an essential
singularity of $f^p$. In particular, $\mathcal{F}(f)$ has no
invariant Herman ring. Further, if $f$ has only one pole then Herman
rings of period $2$ do not exist.
 \label{hermanring-period}
\end{theorem}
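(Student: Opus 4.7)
The first claim is the core of the theorem, and I plan to argue by contradiction. Assume the bounded component $D$ of $H^c$ contains no essential singularity of $f^p$; then $f^p$ extends meromorphically to the open topological disk $\Delta$ obtained as the interior of $H\cup D$ in $\widehat{\mathbb{C}}$. The plan is to uniformize $\Delta$ so that $f^p|_H$ takes a recognizable normal form, and then to use the identity theorem to propagate that form to all of $\Delta$. Concretely, I would choose a conformal map $h\colon\Delta\to\mathbb{D}$ with $h(H)=\{r<|z|<1\}$ for some $r\in(0,1)$ and with $h|_H$ conjugating $f^p|_H$ to the irrational rotation $R_\alpha\colon z\mapsto e^{2\pi i\alpha}z$; its existence follows from the Riemann mapping theorem together with a boundary-correspondence adjustment, using that the modulus of $H$ determines $r$. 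Then $\widetilde f:=h\circ f^p\circ h^{-1}$ is meromorphic on $\mathbb{D}$ and coincides on $\{r<|z|<1\}$ with a conformal automorphism of that annulus, which by the classical classification is either $z\mapsto e^{i\theta}z$ or $z\mapsto re^{i\theta}/z$. In the rotation case the identity theorem forces $\widetilde f\equiv R_\alpha$ on all of $\mathbb{D}$, so $f^p$ has a linearizable neutral fixed point $z_0:=h^{-1}(0)\in D^\circ$ whose Siegel disk then contains $\Delta\supset H$---impossible, since $H$ is itself a Fatou component. In the inversion case $\widetilde f^2=\mathrm{id}$ on $\mathbb{D}$, so $f^{2p}=\mathrm{id}$ on $\Delta$ and hence on $\mathbb{C}$ by analytic continuation, contradicting the transcendence of $f$.

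The second claim is then immediate: for $p=1$ the only essential singularity of $f$ is $\infty\notin D$, contradicting the first claim, so $\mathcal{F}(f)$ has no invariant Herman ring. For the third claim, suppose $H$ and $H_1=f(H)$ form a $2$-cycle of Herman rings for $f\in M_o^1$, and let $p_0$ denote the unique pole of $f$, of order $k$. The essential singularities of $f^2$ are exactly $\{\infty,p_0\}$, so the first claim forces $p_0$ to lie in both the bounded component $D$ of $H^c$ and the bounded component $D_1$ of $H_1^c$; since $H,H_1$ are disjoint annuli in $\widehat{\mathbb{C}}$ both enclosing $p_0$, they are nested, and after relabeling we may assume $H_1\cup D_1\subset D$. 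The conformal bijection $f|_H\colon H\to H_1$ either pairs the inner (resp.\ outer) boundary of $H$ with the inner (resp.\ outer) boundary of $H_1$, or swaps them, and similarly for $f|_{H_1}$; the requirement that $f^2|_H$ is an irrational rotation (hence preserves each boundary of $H$) forces the two choices to agree, yielding a preserving and a reversing case.

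In the preserving case, I would apply the argument principle on the topological disks $H\cup D$ (with boundary $\partial_+ H$, which $f$ maps to $\partial_+ H_1$) and $H_1\cup D_1$ (boundary $\partial_+ H_1$, mapped to $\partial_+ H$), counting pre-images of a point $a\in H_1$. The first application yields $k+1$ pre-images in $H\cup D$, of which exactly one lies in $H$ (coming from the degree-one map $f|_H$), giving $k$ pre-images in $D$; the second yields $k+1$ pre-images in $H_1\cup D_1$, all of them in $D_1$. Since $D_1\subset D$, this forces $k+1\le k$, which is absurd. In the reversing case, an analogous computation (now with reversed orientation of the boundary image curves) shows that $f$ restricts to a conformal bijection of the open annular region $\Omega$ between $\partial_- H$ and $\partial_+ H_1$ onto itself. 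Since the conformal automorphism group of an annulus is compact, the iterates $\{f^n|_\Omega\}$ form a normal family, so $\Omega\subset\mathcal{F}(f)$; and since $\partial\Omega\subset\mathcal{J}(f)$, the region $\Omega$ is a whole Fatou component. Being $f$-invariant and doubly connected, $\Omega$ is an invariant Herman ring of $f$, contradicting the second claim.

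The main technical obstacle I foresee is the construction of $h$ in the first claim: one must produce a single conformal map $\Delta\to\mathbb{D}$ that simultaneously normalizes $H$ to a round concentric annulus and conjugates $f^p|_H$ to $R_\alpha$. This should be feasible by first taking an arbitrary Riemann map of $\Delta$ and then using the boundary parametrization of $\partial_- H$ induced by the linearizing coordinate on $H$ to adjust $h|_{D^\circ}$ via Carath\'{e}odory boundary correspondence, but the argument requires care. A secondary delicate point arises in the reversing case of the third claim: one must verify from the full argument-principle tally that $\Omega$ is mapped bijectively to itself (and neither to a proper subset nor beyond) so that $\Omega$ really is an invariant annular Fatou component rather than merely contained in one.
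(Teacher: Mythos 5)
Your proof of the first (and main) claim has a fatal gap: the conformal map $h\colon\Delta\to\mathbb{D}$ that simultaneously sends $H$ onto a round concentric annulus and conjugates $f^p|_H$ to a rotation does not exist in general. The linearizing coordinate of a Herman ring is a specific conformal map on $H$ (unique up to rotation and inversion of the annulus), and it extends holomorphically across the bounded complementary component $D$ precisely when $H$ is contained in a rotation domain about an interior fixed point --- which is exactly what you are trying to prove cannot happen, so you are assuming the conclusion. A decisive sanity check: your argument for the first claim never uses the hypothesis $f\in M_o$, nor any property of $f$ beyond meromorphy of $f^p$ on $\Delta$; applied verbatim to a rational map (which has no essential singularities anywhere, so the hypothesis ``no essential singularity in $D$'' is automatic) it would show that rational maps have no Herman rings, which is false. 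The obstruction is not a ``technical obstacle requiring care'' but the very phenomenon that makes Herman rings exist. Since your proofs of the second and third claims are deductions from the first, the whole proposal collapses.

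For comparison, the paper's proof of the first claim makes essential use of the omitted values. Take an $f^p$-invariant Jordan curve $\gamma\subset H$. If $f^p$ has no singularity in $B(\gamma)$, then $f^p(B(\gamma))=B(\gamma)$, so the forward orbit of $B(\gamma)$ is bounded and $B(\gamma)\subset\mathcal{F}(f)$, contradicting $B(\gamma)\cap\mathcal{J}(f)\neq\emptyset$. If the only singularities in $B(\gamma)$ are poles, then $f^p(B(\gamma))$ is an unbounded open set whose boundary lies in $\gamma$ and which meets $B(\gamma)$, so its closure contains all of $\widehat{\mathbb{C}}$ off $\gamma$ and in particular an omitted value of $f$; but $\overline{f(B)}$ avoids $O_f$ for every bounded domain $B$ (Remark~\ref{allmultiplyconnected}(iii)), a contradiction. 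Hence an essential singularity lies in $B(H)$. For the period-two, one-pole case the paper likewise works with invariant curves $\gamma\subset H_0$ and $\gamma_1=f(\gamma)\subset H_1$ in the nested configuration: the region $A=B(\gamma)\setminus B(\gamma_1)$ is pole-free with $f$-invariant boundary $\gamma\cup\gamma_1$, so $f(A)=A$, forcing $A\subset\mathcal{F}(f)$ while $A$ visibly meets the Julia set. Your argument-principle tally on regions bounded by $\partial_{+}H$ is also shaky on its own terms, since those boundary components lie in the Julia set and $f$ need not extend nicely to them; the paper avoids this by always working with invariant curves interior to the rings.
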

A wandering domain $U$ is called Baker wandering if for large enough
$n$, $U_n$ is bounded, multiply connected and surrounds $0$ such
that $U_n \to \infty$ as $n \to \infty$.  Given any path
$\gamma(t)~:~[0,~\infty) \to \mathbb{C}$ with $\lim_{t \to \infty}
\gamma(t)=\infty$, $\gamma$ intersects $U_n$ for all large $n$ where
$U$ is any Baker wandering domain of $f$. Consequently, $\lim_{t \to
\infty} f(\gamma(t)) $ cannot be finite. This rules out the
possibility of a finite asymptotic value and in particular, any
 omitted value for $f$. Thus, we have proved
\begin{theorem}
If $f \in M$ has a finite asymptotic value, then $\mathcal{F}(f)$ has
no Baker wandering domain. In particular, this is the case for all
$f \in M_o$. \label{nobakerwanderindomain}
\end{theorem}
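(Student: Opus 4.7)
The plan is to argue by contradiction. Suppose $f \in M$ has a finite asymptotic value $a$ and a Baker wandering domain $U$. The basic intuition, already sketched in the prose immediately preceding the theorem, is that the iterates $U_n$ eventually form bounded annular Fatou components that surround $0$ and escape to infinity, so any path escaping to infinity must pass through them; the dynamics then pin down the images in a way that conflicts with convergence of $f \circ \gamma$ to a finite value.

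To carry this out, I would first fix an asymptotic path $\gamma:[0,\infty)\to\mathbb{C}$ with $\gamma(t)\to\infty$ and $f(\gamma(t))\to a$, and choose $N$ such that for every $n\geq N$ the component $U_n$ is bounded, multiply connected, surrounds $0$, and satisfies $\min_{z\in U_n}|z|\to\infty$. Since $U_n$ surrounds $0$, the complement $\widehat{\mathbb{C}}\setminus U_n$ has a bounded component $D_n$ containing $0$ and an unbounded component. Once $n$ is large enough that $\min_{z\in U_n}|z|>|\gamma(0)|$, the starting point $\gamma(0)$ lies in $D_n$, while for $t$ large $\gamma(t)$ lies in the unbounded complementary component; by continuity and connectedness of the image of $\gamma$, the curve must then actually enter $U_n$ at some time $t_n$. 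Combined with the escape of the $U_n$'s and $\gamma(t)\to\infty$, this forces $t_n\to\infty$.

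The dynamical condition $f(U_n)\subset U_{n+1}$ finishes the argument: on the one hand $f(\gamma(t_n))\in U_{n+1}$ has modulus at least $\min_{z\in U_{n+1}}|z|\to\infty$, while on the other hand $t_n\to\infty$ forces $f(\gamma(t_n))\to a$, a finite value; this is impossible. For the in particular clause I would invoke the fact, recalled in Section~1, that for every $w\in O_f$ each singularity of $f^{-1}$ over $w$ is direct transcendental, so $w$ is in particular a finite asymptotic value, and the first part of the theorem applies. The only real subtlety is verifying that $\gamma$ truly meets $U_n$ rather than merely its boundary, but this is the separation argument above and I do not expect it to be a genuine obstacle.
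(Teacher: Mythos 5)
Your proposal is correct and follows essentially the same route as the paper, which proves the theorem in the short paragraph preceding its statement: any path tending to $\infty$ must meet the components $U_n$ (since they are bounded, surround $0$, and tend to $\infty$), so $f\circ\gamma$ meets $U_{n+1}\to\infty$ and cannot converge to a finite value. Your added detail that the path must enter the open set $U_n$ (not merely $\partial U_n$) is handled correctly by the component-separation argument, since a connected set in $\widehat{\mathbb{C}}\setminus U_n$ lies in a single complementary component.
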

Theorem~\ref{singleton-buried} gives a possibility for existence of
 multiply connected wandering domains for $f \in M_o$. Eventual connectivity of a wandering
domain $W$ of a meromorphic function is said to exist if $c(W_n)=p$
for all sufficiently large $n$ and some $p \in \mathbb{N}$.
Existence of eventual connectivity in general is a question yet to
have a complete answer. Zheng obtained the following result.
\begin{theorem*} [C] [( {\cite[p.\,219]{zheng-book}})]
Let $f~:~\mathbb{C} \to \widehat{\mathbb{C}}$ be transcendental
meromorphic and $W$ be a wandering domain in $\mathcal{F}(f)$. Then,
exactly one of the following is true.
\begin{enumerate}
\item For each $n$, $c(W_n)=\infty$.

\item For all large $n$, $c(W_n)=1$ or $2$.
\item For all large $n$, $c(W_n)=p \geq 3$ and $p$ is independent of
$n$.  In this case, $f~:~W_n \to
   W_{n+1}$ is univalent for all large $n$.
\end{enumerate}
\end{theorem*}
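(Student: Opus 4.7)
The plan is to apply the Riemann--Hurwitz formula to the restrictions $f\colon W_n\to W_{n+1}$ between successive components of the wandering orbit and to analyze the resulting relation on the sequence of connectivities $c(W_n)$. For $n$ sufficiently large I expect $f\colon W_n\to W_{n+1}$ to be a proper holomorphic map of some finite degree $d_n\geq 1$ with finite total ramification $R_n\geq 0$, yielding
\[
2-c(W_n)\;=\;d_n\bigl(2-c(W_{n+1})\bigr)-R_n,
\]
equivalently $c(W_n)=d_n\bigl(c(W_{n+1})-2\bigr)+2+R_n$. The remainder of the argument is essentially a bookkeeping exercise with this single identity.

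First, substituting $c(W_{n+1})=\infty$ with finite $d_n,R_n$ forces $c(W_n)=\infty$, and symmetrically, so infinite connectivity is preserved in both directions along the orbit; this accounts for case~(1). Assume therefore that $c(W_n)<\infty$ for all large $n$. The identity shows that $c(W_n)\leq 2$ is preserved under forward iteration: if $c(W_n)\leq 2$ then $d_n\bigl(c(W_{n+1})-2\bigr)=c(W_n)-2-R_n\leq 0$, and since $d_n\geq 1$ this forces $c(W_{n+1})\leq 2$. Hence if some large index satisfies $c(W_{n_0})\leq 2$, then case~(2) holds. Otherwise $c(W_n)\geq 3$ for every large $n$, and computing successive differences
\[
c(W_n)-c(W_{n+1})\;=\;(d_n-1)\bigl(c(W_{n+1})-2\bigr)+R_n\;\geq\;0
\]
shows that $(c(W_n))$ is non-increasing and bounded below by $3$, hence eventually constant at some value $p\geq 3$. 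When it is constant, the right-hand side vanishes, which forces $d_n=1$ and $R_n=0$: the map $f\colon W_n\to W_{n+1}$ is univalent, giving case~(3).

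The main obstacle is not the combinatorics above but the setup step, namely justifying that for all large $n$ the restriction $f\colon W_n\to W_{n+1}$ is proper of finite degree with Riemann--Hurwitz applicable. A priori $f(W_n)$ may be $W_{n+1}$ minus an asymptotic value, and the degree could be infinite if poles or transcendental singularities of $f^{-1}$ accumulate in $W_n$. The resolution uses the Bolsch-type extension of the Riemann--Hurwitz formula to holomorphic maps between finitely connected planar domains, together with the observation that the pairwise distinct wandering components $W_n$ can each meet only finitely many singular orbits of $f$, since singular values are countable and each lies in at most one Fatou component. Making this reduction precise, and in particular extracting uniform finiteness of $d_n$ and $R_n$ past some index, is the technical heart of the proof and the principal source of difficulty.
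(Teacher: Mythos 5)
Your bookkeeping with the Riemann--Hurwitz identity $c(W_n)-2=d_n\bigl(c(W_{n+1})-2\bigr)+R_n$ is exactly the computation the paper does, but the setup on which you hang it contains a genuine gap, and it is the one you yourself flag: the assumption that $f\colon W_n\to W_{n+1}$ is \emph{proper of finite degree for all large $n$} is simply false in general, and your proposed repair does not work. Since $W_n$ is a full component of $f^{-1}(W_{n+1})$, the failure of properness is not caused by $W_n$ meeting ``finitely many singular orbits''; it occurs when $W_n$ is unbounded and contains tracts over asymptotic values lying in $W_{n+1}$, in which case $f$ takes every value of $W_{n+1}$ infinitely often in $W_n$ and there is no finite degree $d_n$ at all. (Also, the set of asymptotic values of a transcendental meromorphic function need not be countable, so even that part of your heuristic is unsafe.) This non-proper case genuinely arises in cases (1) and (2) of the theorem, so no argument can establish eventual properness outright; properness is only forced, a posteriori, in the regime of case (3).

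The paper resolves this not by proving properness but by invoking Bolsch's theorem (Theorem~D in the text), which gives a dichotomy for any component $G$ of $f^{-1}(H)$: either $G$ is an \emph{island} over $H$ (proper of finite degree $n$, with $c(G)-2=n(c(H)-2)+v$ and $v\le 2n-2$), or $G$ is a \emph{tongue} (infinite valency), in which case $c(H)>2$ forces $c(G)=\infty$. With this in hand the trichotomy falls out: if some $c(W_m)<\infty$ and some $c(W_k)\le 2$, then whichever alternative holds for $W_k$ over $W_{k+1}$ yields $c(W_{k+1})\le 2$ (the island case by your inequality, the tongue case by contraposition of $c(W_{k+1})>2\Rightarrow c(W_k)=\infty$), giving case (2); if instead $c(W_n)\ge 3$ for all $n$ and some $c(W_m)<\infty$, the tongue alternative is excluded at every step, so the maps \emph{are} proper there and your monotonicity-and-stabilization argument, identical to the paper's, gives case (3) with $d_n=1$, $R_n=0$. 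So the fix is to replace your ``expect proper'' step by the island/tongue case split; once that is done, the rest of your argument coincides with the paper's proof.
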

As an application,  it was proved in \cite{zheng-book} that for an
entire function, the eventual connectivity of its wandering domains
is $2$ or $\infty$ if it is a Baker wandering domain and $1$
otherwise. This is proved by Rippon et al.
\cite{rippon-stallard-5} for meromorphic functions with finitely
many poles. Here we prove the following.
\begin{theorem}
Let $W$ be a wandering domain of $f \in M_o$. Then eventual
connectivity of $W$ ($ec(W)$ ) exists. More precisely,
\begin{enumerate}

\item If $f \in M_o^1$, $O_f=\{a\} \subset \mathcal{J}(f)$ and $\mathcal{J}_a$ is singleton and buried, then
$ec(W) \in \mathbb{N} \bigcup \{\infty\}$.
\item In all other cases, $ec(W)=1$.
\end{enumerate}

 \label{eventualconnectivity-tk}
\end{theorem}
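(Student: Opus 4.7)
The plan is to combine the classification results of Theorems~\ref{sc1}, \ref{twofatoucomponent}, \ref{onefatoucomponent}, and \ref{singleton-notburied} with Theorem~C. Theorem~C immediately gives that $ec(W)$ exists and belongs to $\mathbb{N}\cup\{\infty\}$ for any wandering domain of any transcendental meromorphic function, so part~(i) is automatic and all the content lies in part~(ii). The driving observation is elementary: a wandering domain can never land on any periodic Fatou component, so no iterate $W_n$ can be a Herman ring or a periodic Baker domain. Every clause in the structural theorems that allows a multiply connected Fatou component forces eventual landing on such a periodic object, so these clauses are vacuous for a wandering~$W$, leaving only simply connected iterates.

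To prove~(ii), I would partition the negation of the hypothesis of~(i) into four exhaustive cases: (A) $f\in M_o^2$; (B) $f\in M_o^1$ with $O_f\subset\mathcal{F}(f)$; (C) $f\in M_o^1$ with $O_f\subset\mathcal{J}(f)$ and $|\mathcal{J}_{O_f}|>1$; and (D) $f\in M_o^1$ with $O_f\subset\mathcal{J}(f)$, $|\mathcal{J}_{O_f}|=1$, and $\mathcal{J}_{O_f}$ not buried. In case~(C) and in the subcase of~(A) with $O_f\cap\mathcal{J}(f)\ne\emptyset$, Theorem~\ref{sc1} gives that every Fatou component is SCH; since clause~(2) of SCH would require some $W_{\bar n}$ to be a Herman ring, impossible for a wandering $W$, we obtain $c(W_n)=1$ for every~$n$. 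Case~(D) is handled by Theorem~\ref{singleton-notburied}: any multiply connected Fatou component either lands on a Herman ring or on the periodic infinitely connected Baker domain $B$, both forbidden to~$W$.

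For case~(A) with $O_f\subset\mathcal{F}(f)$ and for case~(B), a further split is needed according to whether $O_f$ meets one Fatou component $U$ (in which case I apply Theorem~\ref{onefatoucomponent}) or two Fatou components (in which case I apply Theorem~\ref{twofatoucomponent}, a situation which can only occur within~(A)). In the two-component setting, options~(a) and (b) of Theorem~\ref{twofatoucomponent} make every Fatou component simply connected, option~(c) is excluded, and options~(d) and~(e) force SCH, so in every branch $c(W_n)=1$. In the one-component setting, either $W$ lands on $U$, in which case $U$ is itself wandering (as $W$ is) and Theorem~\ref{onefatoucomponent}(iii) gives $c(U_n)=1$ for all~$n$, or $W$ never meets $U$ and Theorem~\ref{onefatoucomponent}(i)--(ii) yields $c(W_n)=1$ directly or forces $W$ to be SCH, again collapsing to simple connectivity.

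The main technical care is in the bookkeeping of the case split and in checking that each branch of the structural theorems is genuinely covered. No new analytic input is needed beyond the observation that a wandering domain avoids every periodic Fatou component; this alone renders the only non-trivial SCH or periodic-Baker alternatives vacuous and reduces the problem to a tidy enumeration of the cases already dispatched by Theorems~\ref{sc1}--\ref{singleton-notburied}.
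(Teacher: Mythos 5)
Your part~(ii) is essentially the paper's argument, spelled out in more detail, and the case enumeration is exhaustive and correct (with one small repair needed: SCH is a property of a single component, so when you invoke Theorem~\ref{onefatoucomponent}(ii) or Theorem~\ref{sc1} you should apply it to each iterate $W_n$, not just to $W$, to conclude $c(W_n)=1$ for all $n$; this is immediate since the relevant hypotheses are inherited by $W_n$). The genuine gap is in part~(i). You assert that Theorem~C ``immediately gives that $ec(W)$ exists \ldots for any wandering domain of any transcendental meromorphic function.'' That is false, and the paper itself warns against it: just before Theorem~C it states that existence of eventual connectivity is in general an open question. The culprit is alternative~(2) of Theorem~C, which only says $c(W_n)\in\{1,2\}$ for all large $n$; it does not forbid the connectivity from oscillating between $1$ and $2$ forever, and Theorem~D does not exclude this a priori (a tongue over a doubly connected domain can be simply connected, and an island over a simply connected domain can be doubly connected when critical points are present). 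So existence of $ec(W)$ is exactly the point that needs an argument in case~(i).

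The missing ingredient is Remark~\ref{remark-lemma-general3}(i), the one-point variant of Lemma~\ref{general3}. Under the hypotheses of~(i) the omitted value $a$ itself lies in $B(\alpha)$ for every closed curve $\alpha$ in $\bigcup_{k\ge 0}W_k$ with $O_f\subset B(\alpha)$, and since $a\in\mathcal{J}(f)$ it also witnesses $B(\alpha)\cap\mathcal{J}(f)\neq\emptyset$; the remark then yields that if $c(W_n)>1$ for some $n$, then $c(W_n)>1$ for \emph{all} $n$. With that dichotomy in hand --- either $c(W_n)=1$ for every $n$ (so $ec(W)=1$), or $c(W_n)>1$ for every $n$ --- each alternative of Theorem~C pins down a single eventual value ($\infty$, $2$, or $p\ge 3$), and existence of $ec(W)$ follows. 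Without this step your proof of~(i) does not go through.
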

\begin{remark}
For a meromorphic function with finitely many poles, a multiply
connected wandering domain $W$ is Baker wandering if and only if
$W_n$ is multiply connected for infinitely many values of $n$
\cite{rippon-stallard-5}. From this and
Theorem~\ref{nobakerwanderindomain}, we conclude that if a function
$f \in M_o$ has finitely many poles and has a wandering domain $W$,
then $c(W_n)=1$ for all but finitely many values of $n$. In other
words, eventual connectivity of each wandering domain is one.
\end{remark}

\section{Proofs of Theorems~\ref{sc1},~\ref{twofatoucomponent}, ~\ref{onefatoucomponent},
~\ref{singleton-notburied} and ~\ref{singleton-buried}}

The following lemma concerning general meromorphic functions is useful
for our purposes.
\begin{lem} Let $f \in M$ and $V$ be a multiply connected  Fatou component of $f$. Also suppose that
$\gamma$ is a closed curve in $V$ with $B(\gamma) \bigcap
\mathcal{J}(f) \neq \emptyset$. Then there is an $n \in \mathbb{N}
\bigcup \{0\}$ and a closed curve $\gamma_n \subseteq f^n(\gamma)$
in $V_n$ such that $B(\gamma_n)$ contains a pole of $f$.
  Further, if $O_f \neq \emptyset$, then $O_f \subset B(\gamma_{n+1})$
  for some closed curve $\gamma_{n+1}$ contained in $f(\gamma_n)$.
\label{general1}
\end{lem}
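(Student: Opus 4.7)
The plan is to chase a pre-pole of $f$ through the iterates of $\gamma$ using the argument principle, forcing the orbit to stay ``inside'' the evolving curves $f^k(\gamma)$ until a pole is enclosed, and then to read off the location of omitted values from a winding-number count once that happens. Since $f \in M$, the backward orbit of $\infty$ is dense in $\mathcal{J}(f)$, so the hypothesis $B(\gamma) \cap \mathcal{J}(f) \neq \emptyset$ supplies a point $z_0 \in B(\gamma)$ with $f^m(z_0) = \infty$ for some smallest integer $m \geq 1$. Writing $z_k = f^k(z_0)$, the orbit terminates at the pole $z_{m-1}$. Replacing $\gamma$ by its outer boundary, I may assume $\gamma$ is a positively oriented simple closed curve; this reduction is harmless because any closed subcurve of the outer boundary sits inside $\gamma$, and so its forward iterates lie inside $f^n(\gamma)$.

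Let $\gamma_k$ denote $f^k(\gamma)$ viewed as a closed curve in $V_k$. I would prove by induction on $k$ that, provided none of $B(\gamma_0), \dots, B(\gamma_{k-1})$ meets the polar set of $f$, the winding function $n(\gamma_k, \cdot)$ is nonnegative on $\mathbb{C} \setminus \gamma_k$ and $n(\gamma_k, z_k) \geq 1$. The inductive step follows from the argument principle for $f$ and $\gamma_k$: since no enclosed poles contribute,
\[
n(\gamma_{k+1}, w) \;=\; \sum_{f(z)=w} m(z)\, n(\gamma_k, z)
\]
for every $w \notin \gamma_{k+1}$, a sum of nonnegative integers. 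Taking $w = z_{k+1}$, the summand at $z = z_k$ is at least $1$, which delivers both halves of the inductive claim. The induction cannot continue past $k = m-1$, since then $z_{m-1} \in B(\gamma_{m-1})$ is itself a pole; hence there is a least index $n \leq m-1$ for which $B(\gamma_n)$ contains a pole, proving the first assertion.

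For the second assertion I would set $\gamma_{n+1} = f(\gamma_n) \subseteq f^{n+1}(\gamma)$ and reapply the argument principle. This time at least one enclosed pole $p$ contributes $m(p)\, n(\gamma_n, p) \geq 1$ with a minus sign, while any $w \in O_f$ has no preimages at all, giving
\[
n(\gamma_{n+1}, w) \;=\; -\sum_{p \text{ pole}} m(p)\, n(\gamma_n, p) \;\leq\; -1,
\]
which forces $w$ into a bounded component of $\mathbb{C} \setminus \gamma_{n+1}$, i.e. $w \in B(\gamma_{n+1})$. The main difficulty I anticipate is that $\gamma_k = f^k(\gamma)$ typically loses simplicity under iteration, so a priori winding numbers about a given point could come with mixed signs and cancel in the argument-principle identity; the nonnegativity clause in the inductive hypothesis is engineered precisely to block this cancellation, and it propagates exactly because $f$ is holomorphic and no pole has yet been enclosed.
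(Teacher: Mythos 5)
Your strategy is genuinely different from the paper's. The paper also begins with a pre-pole of minimal order in $B(\gamma)$, but then argues via the open mapping theorem: it takes $\gamma_n=\partial\left(f^n(B(\gamma))\right)$, observes that $f^n(B(\gamma))$ already contains the pole, and for the second assertion shows by a compactness/continuity argument that $\overline{f(B(\gamma_n))}$ cannot meet $O_f$, so that the complement of this closure is a bounded open set containing $O_f$ with boundary inside $f(\gamma_n)$. Your argument-principle version is a legitimate alternative, and in the second step it is arguably tidier: the identity $n(\gamma_{n+1},w)=-\sum_p k_p\, n(\gamma_n,p)$ for $w\in O_f$ compresses the paper's ``omitted values avoid $\overline{f(B(\gamma_n))}$'' step into one line.

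Two points need repair. First, the reduction ``replace $\gamma$ by its outer boundary'' is not automatic: the boundary of the unbounded complementary component of a closed curve need not be a simple closed curve (a figure-eight already fails), and a bounded complementary component can carry winding number $0$, so $z_0\in B(\gamma)$ does not by itself give $n(\gamma,z_0)\geq 1$. What you actually need is the classical fact that a locally connected continuum separating $z_0$ from $\infty$ contains a simple closed curve that still separates them; this should be invoked or proved (the paper is comparably informal at the analogous point, so this is venial). Second, and more substantively, your choice of terminal index breaks the second assertion. You take $n$ to be the least index with $B(\gamma_n)$ containing a pole; but a pole may lie in a bounded complementary component of $\gamma_n$ of winding number $0$ — your induction gives only nonnegativity, and in fact $n(\gamma_k,w)$ counts the $f^k$-preimages of $w$ in $B(\gamma)$, which vanishes at every point outside $f^k(B(\gamma))$, including points of other bounded components. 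For such an $n$ your display degenerates to $n(\gamma_{n+1},w)\leq 0$ and you cannot conclude $O_f\subset B(\gamma_{n+1})$. The fix is to define $n$ as the least index at which some pole has \emph{nonzero} winding number about $\gamma_n$: this index exists and is at most $m-1$ because the induction forces $n(\gamma_{m-1},z_{m-1})\geq 1$; it still places a pole in $B(\gamma_n)$, so the first assertion survives; and it restores the strict inequality $n(\gamma_{n+1},w)\leq -1$ needed for the second.
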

\begin{proof}
Since $f \in M$ and $B(\gamma)\bigcap  \mathcal{J}(f) \neq
\emptyset$, there exists a $z \in B(\gamma)$ satisfying
$f^k(z)=\infty$ for some $k \in \mathbb{N}$. The set $\mathcal{N}=\{
m \in \mathbb{N}~:~ f^{m}(z)=\infty~\mbox{for some }~ z \in
B(\gamma) \}$ is a non-empty subset of $\mathbb{N}$ and  the Well-Ordering Property
 of natural numbers gives that $\mathcal{N}$ has a smallest element. Let it be
$\tilde{n}$ and set $n=\tilde{n}-1$. Then $n \in  \mathbb{N} \bigcup
\{0\}$ and $f^{n}~:~B(\gamma)   \to
 \mathbb{C}$   is analytic.   Hence,  $\gamma_n = \partial (f^{n}(B(\gamma)))$
 is a  closed curve contained in $V_n$ with
$\gamma_n \subseteq f^{n}(\gamma)$ and $B(\gamma_{n})$ contains a
pole of $f$.
\par
Suppose that the closure of $f(B(\gamma_n))$ contains an  element
$a$ of $O_f$. Let $\{w_k\}_{k>0}$ be a sequence in $f(B(\gamma_n))$
converging to $a$ and for each $k$, let $z_k$ be a point  in
$B(\gamma_n)$ satisfying $f(z_k)=w_k$. Then, considering a limit
point $z_0$ of $\{z_{k}\}_{k>0}$ we observe that $z_0 \in
\overline{B(\gamma_n)}$. The continuity of $f$ at $z_0$ gives that
$f(z_0)=a$. This is a contradiction since $a$ is an omitted value.
Therefore, $O_f \subset \widehat{\mathbb{C}} \setminus
\overline{f(B(\gamma_n))}$. The set $\overline{f(B(\gamma_n))}$ is
connected and contains a neighborhood of $\infty$. Consequently, $
\widehat{\mathbb{C}} \setminus \overline{f(B(\gamma_n))}$ is a
non-empty open set whose boundary is contained in $f(\gamma_{n})$
and there is a closed (and bounded but not necessarily simple) curve
$\gamma_{n+1} \subseteq f(\gamma_n)$ such that $O_f \subset
B(\gamma_{n+1}) $.

\end{proof}
\begin{remark}
\begin{enumerate}
\item
 Lemma~\ref{general1} also implies that, if there is a multiply
connected Fatou component of $f$ for $f \in M$, then there exists a
pole such that the component of $ \mathcal{J}(f)$ containing it is
bounded . In other words, if each component of the Julia set $
\mathcal{J}(f)$ containing a pole of $f$ is unbounded, then all the
Fatou components of $f$ are simply connected.
\item
 It
follows from the proof of Lemma~\ref{general1} that $c(V_j)>1$ for
all $j \in \{1,~2,...,~n\}$ where $n$ is as given in
Lemma~\ref{general1}.
\item
The second part of the proof of the above lemma gives that $O_f \bigcap
\overline{f(B)} =\emptyset$ for every bounded domain $B$.

\end{enumerate}
\label{allmultiplyconnected}
\end{remark}
Now, we present a lemma that will be used repeatedly.
\begin{lem} Let $f \in M_o$ and $V$
be a multiply connected Fatou component of $f$. Suppose  there are
two distinct numbers $c_1,~c_2 \in  \mathbb{C}$ such that for every
closed curve $\alpha$ in $\bigcup_{k \geq 0}V_k$ satisfying $O_f
\subset B(\alpha)$, we have $B(\alpha) \bigcap \mathcal{J}(f) \neq
\emptyset$ and $c_1,~c_2 \in B(\alpha)$. Then $c(V_n)>1$ for all $n$
and $V_{\bar{n}}$ is a Herman ring for some $\bar{n} \in \mathbb{N}
\bigcup \{0\}$. \label{general3}
\end{lem}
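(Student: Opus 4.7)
The plan is to iterate Lemma~\ref{general1} under the hypothesis to manufacture an infinite chain of closed curves, each enclosing $\{c_1,c_2\}$ in successive forward iterates $V_k$, and then to use the resulting uniform non-degeneration of these curves to force the orbit of $V$ to land on a Herman ring.

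For the first conclusion, I would start from a closed curve $\gamma^{(0)}\subset V$ with $B(\gamma^{(0)})\cap\mathcal{J}(f)\neq\emptyset$, which exists because $V$ is multiply connected. Lemma~\ref{general1} produces an index $n_1\geq 0$ and a closed curve $\gamma^{(1)}\subset V_{n_1+1}$ with $O_f\subset B(\gamma^{(1)})$, and Remark~\ref{allmultiplyconnected}(ii) gives $c(V_j)>1$ for $0\le j\le n_1+1$. The hypothesis then guarantees $B(\gamma^{(1)})\cap\mathcal{J}(f)\neq\emptyset$ and $\{c_1,c_2\}\subset B(\gamma^{(1)})$, so $\gamma^{(1)}$ can be fed back into Lemma~\ref{general1}. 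Iterating produces $0=k_0<k_1<k_2<\cdots$ and closed curves $\gamma^{(j)}\subset V_{k_j}$ with $O_f\cup\{c_1,c_2\}\subset B(\gamma^{(j)})$ and $B(\gamma^{(j)})\cap\mathcal{J}(f)\neq\emptyset$ for every $j\ge 1$; Remark~\ref{allmultiplyconnected}(ii) applied at each stage yields $c(V_l)>1$ for every $l\ge 0$.

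For the second conclusion, the key geometric fact is that each $\gamma^{(j)}$ encloses the two distinct finite points $c_1,c_2$, so each $V_{k_j}$ separates $\{c_1,c_2\}$ from $\infty$ and the Euclidean diameter of $\gamma^{(j)}$ is bounded below by $|c_1-c_2|$. If $V$ were wandering, normality would force every subsequential limit of $\{f^n\}$ on $V$ to be a constant in $\widehat{\mathbb{C}}$: a finite limit $c$ would shrink the images $f^{n_k}(\gamma^{(0)})\supseteq\gamma^{(j_k)}$ (along a common subsequence) into an arbitrarily small disc about $c$, contradicting the two-point enclosure; a limit at $\infty$, combined with $c(V_n)>1$ for every $n$ and with the curves $\gamma^{(j)}$ eventually surrounding every bounded set of $\mathbb{C}$, would place $V$ in the Baker wandering configuration, contradicting Theorem~\ref{nobakerwanderindomain}. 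Hence $V$ is eventually periodic; say the cycle has period $p\ge 1$. Each cycle component is multiply connected, hence of connectivity $2$ or $\infty$. By the pigeonhole principle, infinitely many $\gamma^{(j)}$ lie in a single cycle component $V_{l+r}$; if $V_{l+r}$ were attracting, parabolic, or Baker, then applying $f^{np}$ (which contracts to a constant, or tends to $\infty$, or to a pre-pole) to these curves would contradict the $\{c_1,c_2\}$-enclosure, while Siegel disks are excluded because they are simply connected. The only remaining possibility is that the cycle consists of Herman rings, yielding the desired $\bar n$.

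I expect the hardest part to be making the wandering dichotomy rigorous near infinity: in the sub-case $f^n\to\infty$ on $V$, one must verify that the configuration of multiply connected $V_n$ with $\gamma^{(j)}$ enclosing the fixed finite pair $\{c_1,c_2\}$ forces $V$ to be Baker wandering in the precise sense (bounded multiply connected $V_n$, surrounding $0$, with $V_n\to\infty$), so that Theorem~\ref{nobakerwanderindomain} applies. Parallel care will be needed to exclude the infinitely connected periodic Baker case from the cycle analysis.
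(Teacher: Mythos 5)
Your first conclusion is proved exactly as in the paper: iterating Lemma~\ref{general1}, using the hypothesis to restart the iteration, and invoking Remark~\ref{allmultiplyconnected}(ii) gives $c(V_n)>1$ for all $n$, and your Case of a finite constant limit is also the paper's argument (a finite limit $c$ forces the curves into a small disc about $c$, expelling whichever of $c_1,c_2$ differs from $c$). The genuine gap is in the case where the iterates tend to $\infty$ along the curve-carrying subsequence. Your claim that this ``would place $V$ in the Baker wandering configuration'' is not justified: being Baker wandering requires that $V_n$ be bounded, surround $0$ and tend to $\infty$ for \emph{all} large $n$, whereas you only know that along the subsequence $k_j$ the curves $\gamma^{(j)}$ recede to infinity; nothing prevents other $V_n$ from being unbounded or from staying put. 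Worse, the intended contradiction with the $\{c_1,c_2\}$-enclosure simply is not there: a closed curve whose points all have modulus greater than $R_j\to\infty$ can perfectly well still wind around the fixed finite points $c_1,c_2$ (think of circles $|z|=R_j$), so ``$\gamma^{(j)}\to\infty$'' and ``$c_1,c_2\in B(\gamma^{(j)})$'' are compatible. The same hole reappears in your cycle analysis: an infinitely connected Baker domain of period $p>1$ on which $f^{np}\to\infty$ is not excluded by the enclosure property alone, so your list of eliminated periodic types does not close.

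What the paper uses to kill the $\infty$ case is the omitted value itself, viewed as a direct transcendental singularity: there is an unbounded tract $U$ over a small neighbourhood of $a\in O_f$, the receding curves $\gamma_{n_k}$ (which enclose $O_f$) must meet $U$, so $f(\gamma_{n_k}\cap U)$ stays near $a$, while on the other hand $f(\gamma_{n_k})\subseteq f^{n_k+1}(\gamma)\to\infty$ (finite limits of $\gamma_{n_k+1}$ having already been excluded as in the finite case). Equivalently, you could run the asymptotic-path argument underlying Theorem~\ref{nobakerwanderindomain} directly on the curves: an asymptotic path for $a$ must cross every $\gamma^{(j)}$ because $B(\gamma^{(j)})$ eventually contains any fixed disc, so $f$ along that path cannot tend to $a$. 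Either way, the missing ingredient is the singularity over the omitted value, not the Baker-wandering theorem. Note also that once both constant limits are excluded, the paper finishes more economically than your wandering-versus-periodic split: a non-constant limit function already forces $V$ to land on a Siegel disc or Herman ring, and $c(V_n)>1$ rules out the Siegel disc.
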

\begin{proof}
 Let $\gamma$ be a closed curve in $V$
such that $B(\gamma) \bigcap  \mathcal{J}(f) \neq \emptyset$. By
Lemma~\ref{general1}, there is an $\tilde{n} \in \mathbb{N} $ and a
closed curve $\gamma_{\tilde{n}}$ in $V_{\tilde{n}}$ with
$\gamma_{\tilde{n}} \subseteq f^{\tilde{n}}(\gamma)$ such that $O_f
\subset B(\gamma_{\tilde{n}})$. By assumption of this lemma, we have
$B(\gamma_{\tilde{n}}) \bigcap \mathcal{J}(f) \neq \emptyset$.
Setting $n_1=\tilde{n}$, we have a multiply connected Fatou
component $V_{n_1}$ of $f$ and a closed
 curve  $\gamma_{n_1}$  in $V_{n_1}$ with $\gamma_{n_1} \subseteq f^{n_1}(\gamma)$
  such that $B(\gamma_{n_1}) \bigcap  \mathcal{J}(f) \neq \emptyset$.
 Applying Lemma~\ref{general1} again to $\gamma_{n_1}$ and $V_{n_1}$, we can find $m
\in \mathbb{N} \bigcup \{0\}$ such that $O_f \subset
B(\gamma_{n_1+m+1})$
 where $ \gamma_{n_1+m+1} $ is a closed curve with
  $\gamma_{n_1+m+1} \subseteq f^{m+1}(\gamma_{n_1}) \subset V_{n_1+m+1}$. Set $n_2=n_1+m+1$
   and observe that $n_2>n_1$.
This argument can be repeated since $V_{n_2}$ is a multiply
connected
 Fatou component (which follows from assumption) containing a closed curve $\gamma_{n_2}$ such that
 $B(\gamma_{n_2}) \bigcap \mathcal{J}(f) \neq \emptyset$. An inductive argument gives
  rise to an increasing sequence $\{n_k\}_{k>0}$ such that $O_f
 \subset B(\gamma_{n_k})$ for each $k$. It is clear from Remark~ \ref{allmultiplyconnected} (ii) that
 $c(V_n)>1$ for $n \in \mathbb{N} \setminus \{n_1,~n_2,~n_3...\}$.
Since the above considerations give $c(V_n)>1$ for $n \in \{n_1,~n_2,~n_3,...\}$, we conclude that
 $c(V_n)>1$ for all $n$.
\par
Since the sequence $\{f^{n}\}_{n>0}$ is normal on $V$,
$\{f^{n_k}\}_{k>0}$ has a subsequence converging uniformly to a
function $g(z)$ on compact subsets of $V$. Without loss of
generality, we denote this subsequence by $\{f^{n_k}\}_{k>0}$.  Now,
if $g(z)$ is a constant function $c \in \widehat{\mathbb{C}}$ then
two cases arise as follows.
\\
\underline{\noindent{Case I:~$c\in  \mathbb{C}$}}\\
Since $\{c_1,~c_2\} \subset B(\gamma_{n_k})$ for each $k$, we can
choose a finite point $c'$ in $\{c_1,~c_2\} \setminus \{c\}$ that
is contained in $B(\gamma_{n_k}) $ for all $k $. Now, each ball
around $c$ with radius less than $|c-c'|$ will contain
$\gamma_{n_k}$ for all sufficiently large $k$. This gives that $c'
\notin B(\gamma_{n_k})$ for sufficiently large $k$ leading to a
contradiction.
\\
\underline{\noindent{Case II:~$c = \infty$}}\\
Now we have that $f^{n_k}|_{V} \to \infty$ uniformly on compact
subsets of $V$, and $\gamma_{n_k}\to \infty$ as $k \to \infty$.
Since $O_f\subset B(\gamma_{n_k})$, we can assume that, there is a
pole in each $B(\gamma_{n_k})$. Then, by Lemma~\ref{general1},
$O_f\subset B(\gamma_{n_k+1})$ where $\gamma_{n_k+1} \subseteq
f(\gamma_{n_k} )$ is a Jordan curve as given in the lemma. If
$\gamma_{n_k+1}$ has a finite limit then, after passing down to a
subsequence if necessary, a contradiction can be obtained as in Case
I. Therefore, $\gamma_{n_k+1} \to \infty$ and $f^{n_k+1}(\gamma)\to
\infty$ as $k \to \infty$. There exists a tract $U$ over a small
neighborhood of $a\in O_f$. Obviously, $\gamma_{n_k}\cap
U\not=\emptyset$ for each $k$ and $f(\gamma_{n_k}\cap U)$ is
contained in a small neighborhood of $a$ and at the same time, we
have $f(\gamma_{n_k}\cap U)\subset f^{n_k+1}(\gamma)$. This is a
contradiction.
 \par
 Thus, $f^{n_k}|_V$ converges uniformly to a non-constant function. In particular, $V$ is not a
 wandering domain and $V_{\bar{n}}$ is periodic for some $\bar{n} \in \mathbb{N} \bigcup
 \{0\}$. Since $c(V_n)>1$ for all $n$,
  $V_{\bar{n}}$ cannot be a Siegel disk and it must be a Herman ring as desired.
\end{proof}

\begin{remark}
\begin{enumerate}
\item
Let the second sentence of the assumption of the
Lemma~\ref{general3} be modified as ``Suppose  there is a complex
number $c$ such that for every closed curve $\alpha$ in $\bigcup_{k
\geq 0}V_k$ satisfying $O_f \subset B(\alpha)$, we have $B(\alpha)
\bigcap \mathcal{J}(f) \neq \emptyset$ and $c \in B(\alpha)$''.
Then, we still get $c(V_n)>1$ for all $n \in \mathbb{N}$. However,
it is not true in general that $V$ ultimately lands on a Herman ring
in this case.

 \item
 Following the arguments of
Lemma~\ref{general1}, it is seen that if $\mathcal{F}(f)$ has a
multiply connected component $V$ then a closed curve $\alpha$ exists
in $\bigcup_{k \geq 0}V_k$ such that $O_f \subset B(\alpha)$.
 \end{enumerate}
 \label{remark-lemma-general3}

\end{remark}

\begin{proof}[Proof of Theorem~\ref{sc1}]
Let $V$ be any multiply connected Fatou component. Also, let
$\alpha$ be a closed curve in $\bigcup_{k \geq 0}V_k$ such that $O_f
\subset B(\alpha)$. Such a closed curve exist in view of
Remark~\ref{remark-lemma-general3}(ii). Then $B(\alpha) \bigcap
\mathcal{J}(f) \neq \emptyset$ by assumption. If $f \in M_o^2$ then
choose $c_1,~c_2$ to be the omitted values of $f$. If $|O_f|=1$ and
$ |\mathcal{J}_{O_f}|>1$, then choose any two distinct points of $
\mathcal{J}_{O_f}$ as $c_1$ and $c_2$. Now, $c_1,~c_2 \in B(\alpha)$
and are independent of $\alpha$. Therefore, $V$ is SCH by
Lemma~\ref{general3}.

\end{proof}

\begin{proof}[Proof of Theorem~\ref{twofatoucomponent}]
\begin{enumerate}
\item Suppose $V$ is a multiply connected Fatou component of $f$ and
 $\alpha$ is a closed curve in $V$ such that $B(\alpha) \bigcap \mathcal{J}(f) \neq
 \emptyset$. By Lemma~\ref{general1}, there is an $n \in \mathbb{N}$ and a
  closed curve $\alpha_{n} \subseteq f^{n}(\alpha) \subset V_n$ such that $O_f \subset
  B(\alpha_{n})$. Since $U_i \bigcap O_f \neq \emptyset$ for $i=1,~2$, we have
   $U_i \bigcap   B(\alpha_{n}) \neq \emptyset$ for each $i$.
   Further, if $U_i$ is unbounded for some $i$, then  $U_i \bigcap \alpha_{n} \neq
   \emptyset$ and consequently, $U_i =V_n$.
 Let (a) hold. Then each $U_i$ is unbounded and we have $V_n = U_i$ for each $i$ which means that $U_1 =U_2$.
 This contradicts our assumption that $U_1 \neq U_2$.  Now, let (b) be true.
  Then $U_1$ is unbounded and simply connected. We have $U_1 = V_n$.
  Since $O_f \subset  B(\alpha_{n})$ and $U_1 \neq U_2$, it follows that $U_2 \subset B(\alpha_{n})$.
  Therefore, $U_1^c$ has a bounded component containing $U_2$. In particular, $c(U_1)>1$.
  But $c(U_1)=1$ is assumed in (b). Therefore, all
 the Fatou components of $f$ are simply connected whenever either (a) or (b) is satisfied.
 \item
 Let (c) be true. Then $U_1$ is a multiply connected unbounded component with $U_2$ contained in the
   unbounded component of $U_1^c$. Considering $U_1$ in place of $V$ and arguing similarly as in the first
   portion of the preceding case, we can get $U_{1+n}=U_1$ for
   some $n \in \mathbb{N}\bigcup \{0\}$ and $U_2$ contained in a bounded component of
   $U_1^c$. However, this contradicts the assumption made in (c).
   Thus, the possibility (c) cannot be true.
\item
 Suppose $V$ is a multiply connected Fatou component of $f$
and
 $\alpha$ is a closed curve in $\bigcup_{k \geq 0} V_k$ such
that $O_f \subset B(\alpha)$.  Such a closed curve $\alpha$ exists
in view of Remark~\ref{remark-lemma-general3}(ii). Then $U_i \bigcap
B(\alpha) \neq \emptyset$ for $i=1,~2$. Let (d) be satisfied.
Unboundedness of $U_1$ gives $U_1 \bigcap B(\alpha) \neq \emptyset$
and $\partial U_2 \subset B(\alpha)$. Similarly, if (e) is satisfied
then at least one element of $\{\partial U_1,~\partial U_2\}$, say
$\partial U_1$, is contained in $B(\alpha)$. Choose two distinct
points $c_1,~c_2$ in $B(\alpha) \bigcap \partial U_2$ if (d) is true
or in $B(\alpha) \bigcap
\partial U_1 $ if (e) is true. Then $c_1,~c_2 \in B(\alpha)$ and
$B(\alpha) \bigcap \mathcal{J}(f) \neq \emptyset$ for each closed
curve $\alpha$ in $\bigcup_{k \geq 0} V_k$ with $O_f \subset
B(\alpha)$. By Lemma~\ref{general3}, $V$ is SCH.
\end{enumerate}
\end{proof}
\begin{lem}
Suppose $f \in M_o$.  If $O_f \bigcap U \neq \emptyset$ for some
Fatou component $U$, then $c(U_1)=1$  implies $c(U)=1$.
\label{iffsc}
\end{lem}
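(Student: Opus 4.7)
\emph{Plan.} I will argue by contraposition: assuming $c(U)>1$, I show $c(U_1)>1$. Fix an omitted value $a\in O_f\cap U$. If $U_1=U$, then $c(U_1)=c(U)>1$ and we are done, so I assume throughout that $U_1\neq U$, whence $U\cap U_1=\emptyset$.

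Because $U$ is multiply connected, $\widehat{\mathbb{C}}\setminus U$ has at least one bounded component $K$, and one may choose a Jordan curve $\gamma\subset U$ surrounding $K$, so that $K\subset B(\gamma)$. I first verify that $B(\gamma)\cap\mathcal{J}(f)\neq\emptyset$: either $K\subset\mathcal{J}(f)$, or $K$ meets some Fatou component $V\neq U$, in which case the connectedness of $V$ and the disjointness of $V$ from $U\supset\gamma$ force $V\subset K$, so that $\emptyset\neq\partial V\subset K\cap\mathcal{J}(f)$. Now Lemma~\ref{general1} applies and produces an integer $n\geq 0$ with a closed curve $\gamma_n\subseteq f^n(\gamma)\subset U_n$ such that $B(\gamma_n)$ contains a pole of $f$, together with a further closed curve $\gamma_{n+1}\subseteq f(\gamma_n)\subset U_{n+1}$ satisfying $O_f\subset B(\gamma_{n+1})$.

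If $n\geq 1$, Remark~\ref{allmultiplyconnected}(ii) immediately yields $c(U_j)>1$ for all $j\in\{1,\dots,n\}$, and in particular $c(U_1)>1$. The remaining case is $n=0$, for which $\gamma_1\subseteq f(\gamma)\subset U_1$ and $a\in O_f\subset B(\gamma_1)$. Since $U$ is connected, disjoint from $\gamma_1\subset U_1$, and meets $B(\gamma_1)$ at $a$, the whole of $U$ must lie in the bounded component of $\widehat{\mathbb{C}}\setminus\gamma_1$ containing $a$; that is, $U\subset B(\gamma_1)$. Then $\partial U\subset\overline{B(\gamma_1)}=B(\gamma_1)\cup\gamma_1$, and since $\partial U\subset\mathcal{J}(f)$ is disjoint from $\gamma_1\subset U_1\subset\mathcal{F}(f)$, I obtain $\emptyset\neq\partial U\subset B(\gamma_1)\cap\mathcal{J}(f)$.

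To close the argument, I show that $U_1$ is necessarily multiply connected. Were $U_1$ simply connected, $\widehat{\mathbb{C}}\setminus U_1$ would be connected and, since it contains $\infty\in\mathcal{J}(f)$ and is disjoint from $\gamma_1$, it would lie entirely in the unbounded component of $\widehat{\mathbb{C}}\setminus\gamma_1$, forcing $B(\gamma_1)\subset U_1\subset\mathcal{F}(f)$. This contradicts $B(\gamma_1)\cap\mathcal{J}(f)\neq\emptyset$, so $c(U_1)>1$, as required. The main subtlety will be the bookkeeping in the $n=0$ case---specifically, checking that the curve $\gamma_1$ delivered by Lemma~\ref{general1} really traps both $U$ and its boundary inside $B(\gamma_1)$---but this follows directly from the connectedness of $U$ and the assumption $U\neq U_1$.
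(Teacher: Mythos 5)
Your argument is correct and is essentially the paper's own proof of Lemma~\ref{iffsc}: both argue by contraposition, split according to whether $B(\gamma)$ contains a pole (your cases $n=0$ and $n\geq 1$ from Lemma~\ref{general1}), and in the pole case use $O_f\subset B(\gamma_1)$ together with $a\in O_f\cap U$ and $U\neq U_1$ to trap $\partial U$ inside $B(\gamma_1)$. You merely supply more detail than the paper does on the existence of the initial curve $\gamma$ and on why a curve in $U_1$ enclosing Julia points forces $c(U_1)>1$.
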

\begin{proof}
Assume $U_1 \neq U$ because the proof is trivial for $U=U_1$.
Suppose $U$ is multiply connected and $\alpha$ is a closed curve in
$U$ with $B(\alpha) \bigcap \mathcal{J}(f) \neq \emptyset$. If
$B(\alpha)$ contains a pole of $f$, then there exists a closed curve
$\alpha_1 \subseteq f(\alpha) \subset U_1$ such that $O_f \subset
B(\alpha_1)$. It gives that $\partial U \subset B(\alpha_1)$ and
consequently, $B(\alpha_1) \bigcap \mathcal{J}(f) \neq \emptyset$.
If $B(\alpha)$ does not contain a pole of $f$, then $f$ is an
analytic function on $B(\alpha)$ and we have  $B(\alpha_1) \bigcap
\mathcal{J}(f) \neq \emptyset$. Thus, $c(U_1)>1$. In other words,
$c(U_1)=1$ implies $c(U)=1$.

\end{proof}
\begin{proof}[Proof of Theorem~\ref{onefatoucomponent}]
\begin{enumerate}
\item If $V_k$ is multiply connected for some $k \in \mathbb{N} \bigcup
\{0\}$, then by Lemma~\ref{general1}, there is a closed curve
$\alpha \subset V_{m}$ for some $m \geq k$ such that $O_f \subset
B(\alpha)$. Since $V_n \neq U$ for any $n \in \mathbb{N} \bigcup
\{0\}$ and $O_f \subset U$, we have $\partial U \subset B(\alpha)$.
However, this is not possible
 if $U$ is unbounded. Therefore, $U$ is unbounded implies $c(V_n)=1$
 for all $n$.
\item
If $\alpha$ is a closed curve in $\bigcup_{k \geq 0} V_k$ such that
$O_f \subset B(\alpha)$ then $\partial U \subset B(\alpha)$ since
$V_n \neq U$ for any $n \in \mathbb{N} \bigcup \{0\}$. This means that
$B(\alpha)$ intersects the Julia set and contains two points
$c_1,~c_2$ of $\partial U$ for any closed curve $\alpha$ in
$\bigcup_{k \geq 0} V_k$ with $O_f \subset B(\alpha)$. By
Lemma~\ref{general3}, $V$ is SCH.

\item Setting $V=U_1$, we observe that $V_n=U_{1+n} \neq U$ for any
 $n \in \mathbb{N} \bigcup
\{0\}$. If $U$ is unbounded, then $c(U_n)=1$ for all $n \in
\mathbb{N}$ by Theorem~\ref{onefatoucomponent} (i). If $U$ is
bounded, then $V=U_1$ is SCH by Theorem~\ref{onefatoucomponent}(ii).
In fact, the proof of Theorem~\ref{onefatoucomponent}(ii) gives that
$V_n=U_{1+n}$ is SCH whenever $U$ is bounded. Consequently, if
$c(V_n)>1$ for any $n \in \mathbb{N} \bigcup \{0\}$ then $V_{n^*}$
is a Herman ring for some natural number $n^*$. This is not possible
since $V$ is already assumed to be a wandering domain. Therefore,
$c(V_n)=1$ for all $n$ and as result, we get $c(U_n)=1$ for all
natural numbers $n$. Now, simple connectedness of $U_0=U$ follows
from Lemma~\ref{iffsc} and the proof is complete.

\item Observe that $U_k \neq U$ for $k \in \mathbb{N}$ since $U$ is pre-periodic but not periodic.
If $U$ is unbounded, then $c(U_k)=1$ for all $k \in \mathbb{N}$ by
Theorem~\ref{onefatoucomponent} (i). Lemma~\ref{iffsc} gives
$c(U)=c(U_0)=1$. If $U$ is bounded, then $c(U)>1$ implies $c(U_1)>1$
by Lemma~\ref{iffsc}. By Theorem~\ref{onefatoucomponent} (ii), $U_1$
is SCH which means $U$ is SCH.
\item The component $U$ cannot be a Herman ring as it contains at
least one omitted value. Therefore, $c(U_n)=1$ or $\infty$ for all
$n$.
\end{enumerate}
\end{proof}

Now, we present a lemma before giving the proof of
Theorem~\ref{singleton-notburied}. The proof of the lemma follows
trivially. Recall that $\mathcal{J}_z$ denotes the component of
$\mathcal{J}(f)$ containing $z$.
\begin{lem}
Let $f \in M_o^1$ and $O_f =\{a\} \subset \mathcal{J}(f)$. Suppose
$U$ is a Fatou component of $f$ and $\partial U$ contains a point $s
\in \widehat{\mathbb{C}}$ such that $\mathcal{J}_s$ is singleton.
Then $\mathcal{J}_{f(s)}$ is a singleton component of $\partial U_1$
where we take $f(\infty)=a$. In particular, $c(U_n) = \infty$ for
all $n$.
 \label{oneomittedvalue-hermanring}
\end{lem}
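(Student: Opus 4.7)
I would proceed in three stages, case-splitting on the nature of $s$: (a) $s\in\mathbb{C}$ and $s$ is not a pole, (b) $s$ is a pole, or (c) $s=\infty$.

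The first stage is to verify that $f(s)\in\partial U_1$. Cases (a) and (b) follow from ordinary continuity of $f$ at $s$ (with $f(\text{pole})=\infty$) together with the backward invariance of $\mathcal{J}(f)$: any sequence in $U$ converging to $s$ has $f$-images in $U_1$ converging to $f(s)\in\mathcal{J}(f)$, whence $f(s)\in\partial U_1$. Case (c) requires the direct transcendental singularity of $f^{-1}$ over $a$: its tract $T$ is unbounded, and since $\infty\in\partial U$ I would extract a sequence $\{z_k\}\subset U\cap T$ with $z_k\to\infty$, so that $f(z_k)\to a$ and hence $a\in\overline{U_1}\cap\mathcal{J}(f)=\partial U_1$.

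The second stage proves $\mathcal{J}_{f(s)}$ is a singleton, by contradiction. Assume a non-degenerate continuum $K\subset\mathcal{J}(f)$ with $f(s)\in K$. In cases (a) and (b), $f$ is a branched covering from a disk $D$ around $s$ onto a disk $D'$ around $f(s)$; the component of $K\cap\overline{D'}$ through $f(s)$ is a non-trivial continuum meeting $\partial D'$, and its preimage under $f|_D$ contains a non-trivial continuum through $s$ lying in $\mathcal{J}(f)$ by backward invariance, contradicting $|\mathcal{J}_s|=1$. In case (c) the same idea replaces $D$ by the tract $T$ with $f\colon T\to D\setminus\{a\}$ a universal cover, lifting $K\cap(D\setminus\{a\})$ to an unbounded connected set in $T$ whose closure in $\widehat{\mathbb{C}}$ meets $\infty$ together with further limit points, again contradicting $|\mathcal{J}_\infty|=1$.

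For the infinite-connectivity assertion, iterating the two stages above yields $\mathcal{J}_{f^n(s)}$ as a singleton component of $\partial U_n$ for every $n\geq 0$. To upgrade this to $c(U_n)=\infty$, I would invoke density of the backward orbit of $f^{n+k}(s)$ in $\mathcal{J}(f)$, and apply the Stage~2 local analysis in reverse to each preimage $s'\in\partial U_n$ of $f^{n+k}(s)$: the singleton property of $\mathcal{J}_{f^{n+k}(s)}$ pushes back through the branches of $f^{-k}$ to give $\mathcal{J}_{s'}$ singleton, producing infinitely many distinct singleton components of $\partial U_n$. The principal obstacle is case (c) throughout, where the essential singularity at $\infty$ rules out continuity and ordinary local branched-cover arguments; both Stage 1 and Stage 2 must exploit the explicit tract $T$ over $a$ and the hypothesis $a\in\mathcal{J}(f)$ to transfer information between $\infty$ and $a$ along unbounded continua in $T$.
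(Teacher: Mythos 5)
The paper itself gives no argument for this lemma (it merely declares the proof trivial), so your proposal has to stand on its own, and it has genuine gaps precisely where the content of the lemma lies, namely in case (c) and in the final connectivity claim. In Stage 2, case (c), you assert that $f\colon T\to D\setminus\{a\}$ is a universal covering. That is the definition of a \emph{logarithmic} singularity, whereas an omitted value only forces the singularities over $a$ to be \emph{direct}: a tract over an omitted value may contain critical points of $f$ (for example $e^{g}$ omits $0$ while the critical values $e^{g(c)}$ may accumulate at $0$), so there is in general no covering and no lifting of continua. A correct route avoids lifting altogether: if $K=\mathcal{J}_a$ were non-degenerate, then $f^{-1}(K)=f^{-1}(K\setminus\{a\})$ is a nonempty closed subset of $\mathcal{J}(f)\cap\mathbb{C}$; an unbounded component of it, together with $\infty$, is a non-degenerate continuum in $\mathcal{J}(f)$ through $\infty$, contradicting $\lvert\mathcal{J}_\infty\rvert=1$, while a compact, separated piece can be enclosed by a Jordan curve $\tau$ with $\tau\cap f^{-1}(K)=\emptyset$, and then $K$, being connected, meeting $f(B(\tau))$ and missing $\partial f(B(\tau))\subseteq f(\tau)$, must satisfy $K\subseteq f(B(\tau))$ --- impossible since $a\in K$ is omitted. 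Similarly, in Stage 1, case (c), ``$\infty\in\partial U$'' alone does not give $U\cap T\neq\emptyset$: two unbounded connected sets accumulating at $\infty$ need not meet. You must use $\lvert\mathcal{J}_\infty\rvert=1$: since $\{\infty\}$ is a component of the compact set $\mathcal{J}(f)$, there are continua $\Gamma_k\subseteq\mathcal{F}(f)$ separating $\infty$ from a fixed finite point and shrinking to $\infty$; each is connected, lies in $\mathcal{F}(f)$, meets $U$ and hence lies in $U$, and the unbounded connected tract must cross $\Gamma_k$ for large $k$. This is exactly where the singleton hypothesis enters, and your sketch never invokes it there.

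Your Stage 3 also does not work as written: the $f^{k}$-preimages of $f^{n+k}(s)$ are distributed over the boundaries of the many components of $f^{-k}(U_{n+k})$, not over $\partial U_n$, so density of the backward orbit in $\mathcal{J}(f)$ produces no further singleton components of $\partial U_n$; and even infinitely many singleton components of $\partial U_n$ would not by themselves count components of $U_n^{c}$. The efficient argument needs only the one point you already have. If $x\in\partial U_n$ and $\mathcal{J}_x=\{x\}$, let $E$ be the component of $U_n^{c}$ containing $x$; then $\widehat{\mathbb{C}}\setminus E$ is a connected open set, so $\partial E$ is a connected subset of $\mathcal{J}(f)$ containing $x$, forcing $\partial E=\{x\}$ and then $E=\{x\}$. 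If $c(U_n)$ were finite, the finitely many components of $U_n^{c}$ would lie at positive mutual distance, making $x$ an isolated point of $\mathcal{J}(f)$ and contradicting perfectness of the Julia set; hence $c(U_n)=\infty$. Your Stages 1 and 2 for $s$ an ordinary point or a pole (continuity plus complete invariance, and the local proper-map pullback of a continuum via boundary bumping) are sound.
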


\begin{proof}[Proof of Theorem~\ref{singleton-notburied}]
 That the component $\mathcal{J}_a$ of the Julia set is a singleton and not buried means
 $\mathcal{J}_a \subset \partial U$ for some Fatou
component $U$ and $c(U)=\infty$. Taking a closed curve $\gamma$ in
$U$ with $B(\gamma) \bigcap \mathcal{J}(f) \neq \emptyset$ and
arguing similarly as in the first part of the proof of
Lemma~\ref{general3}, a sequence of closed curves
$\{\gamma_{n_k}\}_{k>0}$ can be found such that $\mathcal{J}_a
\subset B(\gamma_{n_k})$ for a closed curve $\gamma_{n_k} \subseteq
f^{n_k}(\gamma) \subset U_{n_k}$. Note that $c(U_{n_k})>1$ for each
$k$. Further, if $f^{n_k}|_{U}$ has a constant limit function, then
it can only be $\infty$ or $a$. In view of the arguments of Case-II
of the proof of Lemma~\ref{general3}, we can assume, without loss of
generality, that $f^{n_k}|_{U} \to a$ as $k \to \infty$ and
$c(U_{n_k})>1$ for each $k$.

\par
Suppose $U$ is a wandering domain. Then, $U_{n_k} \neq U_{n_{k'}}$
for $k \neq k'$ and it follows that $\mathcal{J}_a$ is a buried
component of the Julia set: a contradiction. Therefore, $U_p$ is a
periodic Fatou component for some $p$.

\par
Suppose $\{U_p,~U_{p+1},~U_{p+2},...,U_{p+(l-1)}\}$ is the
$l$-periodic cycle of Fatou components. Then, there is a subsequence
$\{n_{k(i)}\}_{i>0}$ of $\{n_k\}$ and some $t \in
\{p,~p+1,~p+2,...p+(l-1)\}$ such that $f^{n_{k(i)}}(U_t) \subseteq
U_t$. Further, if $U_p$ is an attracting domain or a parabolic
domain, then $f^{n_{k(i)}}|_{U_t} \to a$ as $i \to \infty$. Since $a
\in \mathcal{J}(f)$, $U_t$, and hence  $U_p$, is not an attracting
domain. Also, $U_p$ cannot be a parabolic domain because $a \subset
B(\gamma_{n_{k(i)}})$ for each $i$.
 We have already observed that $c(U_{n_k})>1$.  Consequently,
$U_p$ cannot be a Siegel disk. By
Lemma~\ref{oneomittedvalue-hermanring}, $c(U_{p})=\infty$ and hence
$U_p$ is not a Herman ring.  The only remaining case, which must be
true, is that $U_{p}$ is a Baker domain. That the period of $U_p$ is
at least two and $a$ is a pre-pole follow from the fact that $a$ is
a finite complex number and is a limit function of $f^n|_{U_p}$.
\par
For any multiply connected Fatou component $V$ not landing on a
Herman ring,  the above argument clearly shows that $V_n =B$ for
some $n \in \mathbb{N} \bigcup \{0\}$.
\par
Setting $B=U_p$, we observe that $c(B) =\infty$. Further,
$\mathcal{F}(f)$ has no completely invariant component since $B$ is
not itself completely invariant and any other completely invariant
component would imply $c(B)=1$. Now by Theorem~B, singleton buried
components are dense in the Julia set.

\end{proof}

 Now, we proceed to prove Theorem~\ref{singleton-buried}.

\begin{proof}[Proof of Theorem~\ref{singleton-buried}]
Let $W $ be any multiply connected Fatou component not landing on a
Herman ring. Following the argument of the proof of
Lemma~\ref{general3}, we observe that $\mathcal{J}_a \subset
B(W_{m_k})$ and $f^{m_k}|_{W } \to a$ for some subsequence
$\{m_k\}_{k>0}$. Now, $\mathcal{J}_a$ is buried gives that $W$ does
not land on a periodic Fatou component and hence, is wandering.
\par
Suppose that $\mathcal{F}(f)$ has a multiply connected wandering
domain. If the forward orbit of $a$ is finite then we can find a
subsequence $\{m_{k(i)}\}_{i>0}$ of $\{m_k\}_{k>0}$ such that
$f^{m_{k(i)}}|_{W} \to a$ as $i \to \infty$. Applying Proposition~1
of \cite{zheng1} to this situation we conclude that $a$ is a
pre-pole. But this is not possible by Corollary~\ref{cor-singleton}
(i). Thus, the forward orbit of $a$ is an infinite set. Evidently,
each point of the grand orbit of $a$ (this is the set of all points
$z$ satisfying $f^m(z)=f^n(a)$ for some $m,~n \in \mathbb{N}$) is a
singleton buried component of the Julia set which is clearly dense
in $\mathcal{J}(f)$.
\end{proof}
\section{Proofs of Theorems~\ref{singleton-buried-dense-tk},~\ref{cifc},
~\ref{hermanring-period} and ~\ref{eventualconnectivity-tk}}

 The following result is due to Bolsch \cite{bolsch} and is stated in a
 simpler form to suit our purpose.
\begin{theorem*}[D]
\label{bolsch}
 Let $f~:~\mathbb{C} \to \widehat{\mathbb{C}}~$ be a
transcendental meromorphic function. If $H \subset  \C$ is
a domain and $G$ is any component of $f^{-1}(H)$, then exactly one
of the following holds.
\begin{enumerate}

\item There exists $ n \in \mathbb{N}$ such that $f$ assumes in $G$
every value of $H$ exactly $n$ times. In this case, $c(G)-2=n
(c(H)-2)+v$ and $v \leq 2n-2$, $v$ denoting the number of critical
points of $f$ in $G$ counting multiplicities.
\item $f$ assumes in $G$
every value of $H$ infinitely often with at most two exceptions. In
this case, $c(H)>2$ implies $c(G)=\infty$.
\end{enumerate}
\end{theorem*}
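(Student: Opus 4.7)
The plan is to split the analysis of $f|_G:G\to H$ according to whether this restricted map is proper, which produces the dichotomy, and then to derive the connectivity formulas by means of the Riemann--Hurwitz formula in one case, and Iversen's theorem together with a lifting argument in the other.

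First I would ask whether $f|_G:G\to H$ is proper. Since $f$ has only algebraic (pole-type) singularities at finite points, the map $f|_G$ is either a finite-sheeted branched cover of $H$ by $G$, or else there is a sequence $z_k\in G$ leaving every compact subset of $G$ for which $f(z_k)\to w_0\in H$; equivalently, a branch of $(f|_G)^{-1}$ has a transcendental singularity over some $w_0\in H$. In the proper case, $f|_G:G\to H$ is a branched cover of some finite degree $n$ and every value in $H$ is assumed exactly $n$ times counted with multiplicity, matching case (i). In the non-proper case, Iversen's theorem applied to a punctured neighborhood of $w_0$ along the tract, propagated by monodromy along paths in the connected domain $H$, yields that $f|_G$ takes every value of $H$ infinitely often with at most two exceptions, matching case (ii).

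In the proper case, I would invoke Riemann--Hurwitz for the degree-$n$ branched cover $f|_G:G\to H$,
\[
\chi(G)=n\,\chi(H)-v,
\]
where $v$ is the total ramification of $f$ in $G$. Using the standard identity $\chi(U)=2-c(U)$ for a planar domain of connectivity $c(U)$, this rearranges to $c(G)-2=n(c(H)-2)+v$. The bound $v\le 2n-2$ would then be extracted from the general inequality $c(G)\le n\cdot c(H)$ for branched covers of planar domains (equivalently, the fact that over each complementary component of $H$ the fiber decomposes into at most $n$ complementary components of $G$). In the non-proper case with $c(H)>2$, I would exhibit infinitely many boundary components of $G$ by lifting: choose a Jordan curve $\gamma\subset H$ separating at least two complementary components of $H$ from a third, and avoiding the at most two exceptional values. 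Because $f|_G$ takes every non-exceptional value infinitely often, $f^{-1}(\gamma)\cap G$ has infinitely many components, and the local picture of the direct transcendental tract over $w_0$ forces infinitely many of these to separate $G$ into regions with distinct boundary components, yielding $c(G)=\infty$.

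The principal obstacle is the connectivity assertion in the non-proper case: merely producing infinitely many preimage curves of $\gamma$ is not enough, since many of them could bound the same complementary component of $G$ and fail to contribute separate boundary components. The delicate step is to choose $\gamma$ so that the lifts accumulating at the transcendental tract really do sit in distinct complementary components of $G$. This rests on the local ``spiralling'' structure near a direct transcendental singularity and on the fact that such lifts inherit the separating behavior of $\gamma$ on each sheet of the tract, a point where the Iversen--Gross/Bolsch type analysis of tracts is essential.
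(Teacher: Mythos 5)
First, a point of comparison: the paper does not prove this statement at all. It is quoted as Theorem~D with the explicit attribution to Bolsch \cite{bolsch} (``stated in a simpler form to suit our purpose''), so there is no in-paper argument to measure yours against; your proposal has to stand on its own. Its skeleton is the right one and matches how Bolsch organizes the result: split according to whether $f|_G:G\to H$ is proper, apply the Riemann--Hurwitz relation $\chi(G)=n\chi(H)-v$ with $\chi(U)=2-c(U)$ in the proper case, and get $v\le 2n-2$ from $c(G)\le n\,c(H)$. That half is essentially complete (modulo an exhaustion argument when $c(H)=\infty$, where the naive Riemann--Hurwitz count does not directly apply, and the observation that finite boundary points of $G$ must map into $\partial H$, so that non-properness really does force $z_k\to\infty$).

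The genuine gap is in case (ii), and it is not only where you locate it. The assertion that a non-proper component takes every value of $H$ infinitely often \emph{with at most two exceptions} is the real content of Bolsch's theorem --- a Great Picard theorem for components of preimages --- and ``Iversen's theorem applied to a punctured neighborhood of $w_0$ along the tract, propagated by monodromy'' does not prove it. Iversen's theorem concerns asymptotic values of a function meromorphic in the whole plane; what you actually need is the local statement that if $D$ is a disc and $V$ an unbounded component of $f^{-1}(D)$ on which $f$ is not proper, then $f$ omits at most two values of $D$ in $V$ and takes all others infinitely often. That statement requires a genuine argument (hyperbolic-metric estimates along the tract, or a normality/rescaling argument in the spirit of the proof of Great Picard adapted to a possibly thin unbounded domain), and nothing in your sketch supplies it; once it is in hand, the propagation through $H$ by chains of simply connected subdomains does work, but as written the key Picard-type input is asserted, not proved. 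Your own flagged difficulty --- that infinitely many lifts of a separating Jordan curve $\gamma$ might all bound the same complementary component of $G$, so that $c(G)=\infty$ does not follow from infinitude of $f^{-1}(\gamma)$ alone --- is also real, and the appeal to the ``spiralling structure'' of the tract is again a description of what must be shown (one needs, for instance, that infinitely many compact components of $f^{-1}(\gamma)$ occur, are pairwise non-nested, and each separates a preimage of a prescribed complementary component of $H$ from the rest of $G$). So both halves of case (ii) currently stand as restatements of the conclusion rather than proofs.
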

In the above theorem, we say $G$ is an island of multiplicity $n$
over $H$ if (i) holds ( $f~:~G \to H$ is a proper map in this case).
If (ii) holds, then $G$ is said to be a tongue over $H$.

\begin{proof}[Proof of Theorem~\ref{singleton-buried-dense-tk}]

 If  singleton buried components are dense in
$\mathcal{J}(f)$, then it clearly follows that $\mathcal{J}(f)$ is
disconnected and $\mathcal{F}(f)$ has no completely invariant
components.
\par
Conversely, suppose $\mathcal{F}(f)$ has no completely invariant
components and $\mathcal{J}(f)$ is disconnected. If all the Fatou
components of $f$ are simply connected then $\mathcal{J}(f)$ is
connected, which is against our assumption. Therefore,
$\mathcal{F}(f)$ has at least one multiply connected component, say
$U$. In view of Lemma~\ref{general1}, we can find an $n$ such that
$U_n$ contains a closed and bounded curve $\gamma$ with $O_f \subset
B(\gamma)$. Let $B=B(\gamma)$ and $B_{-1}$ be a component of
$f^{-1}(B)$. Since $f$ does not take two values of $B$ in $B_{-1}$,
the map $f~:~B_{-1} \to B$ cannot be proper. By Theorem~D, $B_{-1}$
is a tongue over $B$. Then $B_{-1}$ is a component of $f^{-1}(B
\setminus O_f)$ and is a tongue over $B \setminus O_f$. Observing
that $c(B \setminus O_f)=3$, we have $c(B_{-1})= \infty$ by
Theorem~D(ii). Clearly, $B_{-1}$ is unbounded as each singularity
lying over an omitted value is transcendental.

\par
Let all the bounded components of the boundary of $B_{-1}$ be
enumerated by $\gamma_i$ for $ i \in \mathbb{N}$. For each $i$,
$f(\gamma_i) \subseteq \gamma$ gives that $f$ has no pole on
$\gamma_i$. Next, we assert that each $\gamma_i$ is a continuum
(compact and connected set) separating the plane and $B(\gamma_i)
\bigcap \mathcal{J}(f) \neq \emptyset$. If $\gamma_i$ does not
separate the plane for some $i$, then a Jordan curve $\alpha$ can be
found in $B_{-1}$ such that $\gamma_i \subset B(\alpha)$ and
$f~:~B(\alpha) \to \mathbb{C}$ is analytic. By the Maximum Modulus
Principle, $f(B(\alpha))$ contains $f(\gamma_i)$. Since $f(\gamma_i)
\subseteq \gamma$, $\gamma$ is connected and $\partial
(f(B(\alpha))) \subseteq f(\alpha)$ does not intersect $\gamma$, we
have $\gamma \subset f(B(\alpha))$. However, this is not possible
because $f(\alpha)$ is a closed curve in $B$,  $B$ is simply
connected and $\gamma = \partial B$. Therefore, each $\gamma_i$ is a
continuum
 separating the plane. If $f(\gamma_i)$ is properly
contained in $\gamma$ for some $i$ then $f(B(\gamma_i)) \supseteq
(\widehat{\mathbb{C}} \setminus O_f) \setminus f(\gamma_i)$ where
$B(\gamma_i)$ is the union of all bounded components of
$\widehat{\mathbb{C}} \setminus \gamma_i$. If $f(\gamma_i) =\gamma$
then $f(B(\gamma_i)) =\widehat{\mathbb{C}} \setminus  B$ as
$B(\gamma_i) \bigcap B_{-1} = \emptyset$. In any case,
$f(B(\gamma_i))$ contains $\infty$ and hence $f$ has a pole in
$B(\gamma_i)$, which proves our assertion that $B(\gamma_i) \bigcap
\mathcal{J}(f) \neq \emptyset$. Note that $B(\gamma_i) \subset
B(\gamma_j)$ is not possible for any $i \neq j$ as each $\gamma_i$
is a bounded component of $\partial B_{-1}$ and $B_{-1}$ is an
infinitely connected unbounded domain. Observe that each $\gamma_i$
is a continuum in the Fatou set and $\gamma_i^{c}$ has at least two
components intersecting the Julia set. This implies that either
$\mathcal{F}(f)$ has an infinitely connected component or has
infinitely many components, each of which is at least doubly
connected. This satisfies the assumption of Theorem~A(ii) and we
have that singleton components are dense in the Julia set. If any of
these components is not buried then an infinitely connected Fatou
component is found and it follows by ~Theorem~B that singleton
buried components are dense in the Julia set. This completes the
proof.
\end{proof}

 The lemma to follow relates completely invariant Fatou components
 with the omitted values of a meromorphic function and will be used for proving
 Theorem~\ref{cifc}. Recall that $M_o^{k}=\{f \in
M~:~|O_f|=k\}$ for $k \in \{1,~2\}$ and let $CIFC_f =\{V~:~V
~\mbox{is a completely invariant Fatou component of}~ f\}$.
\begin{lem}
\label{lemma-completelyinvariant}
 Let $M_{cv}=\{f \in M~:~\mbox{ $f$ has at
least one critical value}\}$.
\begin{enumerate}
 \item
 Let $f \in M_o^{2}$ and $U \in CIFC_f$.
 Then $c(U)=1$ if
 and only if $|U \bigcap O_f|=1$.
 \item
 Let $f \in M_o \bigcap M_{cv}$ and let $U \in CIFC_f$ be such that $c(U)=1$.
Then $CV_f \subset U$ or $O_f \subset U$ where $CV_f$ denotes the
set of all critical values of $f$. In particular, if $f \in M_o^2
\bigcap M_{cv}$, $U \in CIFC_f$ and $c(U)=1$, then $CV_f \subset U$.
\end{enumerate}
\end{lem}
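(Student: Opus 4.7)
For \textbf{part (i), direction $(\Rightarrow)$}, I would rule out $|U\cap O_f|=2$ and $|U\cap O_f|=0$, so that $|U\cap O_f|=1$ is forced. To rule out $|U\cap O_f|=2$, apply Theorem~D with $G=U$ and $H=U\setminus O_f$: complete invariance gives $f^{-1}(H)=U$, and each value of $H$ has infinitely many preimages in $U$ (transcendence plus complete invariance), placing us in case~(ii); then $c(H)=3>2$ forces $c(U)=\infty$, contradicting $c(U)=1$. To rule out $|U\cap O_f|=0$, use the factorization $f=S\circ\tanh\circ g$ from the introduction: since $\pi:=S\circ\tanh:\mathbb{C}\to\widehat{\mathbb{C}}\setminus O_f$ is a covering from a simply connected total space, $\pi^{-1}(U)$ splits into countably many pairwise disjoint simply connected components $\{V_n\}_{n\in\mathbb{Z}}$. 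Then $f^{-1}(U)=\bigsqcup_n g^{-1}(V_n)$ equals the connected set $U$ by complete invariance, so only one $g^{-1}(V_n)$ is nonempty, forcing the entire function $g$ to omit the nonempty open set $\bigcup_{n\neq 0}V_n$ -- impossible by Picard's little theorem.

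For \textbf{part (i), direction $(\Leftarrow)$}, assume $|U\cap O_f|=1$ with $a\in U$ and $b\notin U$. Since a completely invariant Fatou component cannot be a Herman ring, to conclude $c(U)=1$ it suffices to show every Fatou component of $f$ is SCH. If $b\in\mathcal{J}(f)$ this is exactly Theorem~\ref{sc1}. If $b\in\mathcal{F}(f)$, then $b$ lies in some Fatou component $U_b\neq U$, so $O_f$ meets two distinct Fatou components; Theorem~\ref{twofatoucomponent} then yields either that all Fatou components are simply connected (cases (a),(b)) or that each is SCH (cases (d),(e)), with case (c) ruled out directly by that theorem.

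For \textbf{part (ii)}, the ``in particular'' follows from part~(i) together with the general statement: when $f\in M_o^2$ has $c(U)=1$, part~(i) gives $|U\cap O_f|=1<|O_f|$, so $O_f\not\subset U$, leaving $CV_f\subset U$ as the only surviving disjunct. For the general statement I would write $f=T^{-1}\circ h$ (with $h$ entire when $f\in M_o^1$) and $W=T(U)$ simply connected with $h^{-1}(W)=U$. If $O_f\subset U$ we are done. Otherwise pick $a\in O_f\setminus U$ and (for contradiction) $c^{*}\in CV_f\setminus U$, giving $c'=T(c^{*})\in CV_h\setminus W$. Since $h|_U$ avoids $c'$ and $U$ is simply connected, $h|_U$ lifts through the universal cover $\pi(w)=c'+e^{w}$ of $\mathbb{C}\setminus\{c'\}$ to a surjective holomorphic map $\widetilde{h|_U}:U\to\tilde W_0$ between simply connected domains, where $\tilde W_0$ is one component of $\pi^{-1}(W)$; infinite-to-oneness of $h|_U$ (by complete invariance plus transcendence) passes to $\widetilde{h|_U}$. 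When $h$ has no critical points in $U$, $\widetilde{h|_U}$ is unbranched, hence a covering between simply connected domains, hence an automorphism -- contradicting infinite-to-oneness.

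The main obstacle I anticipate is the residual subcase $CV_h\cap W\neq\emptyset$, in which $\widetilde{h|_U}$ branches inside $U$ and the shortcut ``unbranched surjection between simply connected domains is an automorphism'' fails. To finish, I would iterate Theorem~D: successively puncture $W$ at the critical values of $h$ lying in $W$ and track how the connectivity of the corresponding preimage component inside $U$ grows, extracting a contradiction with $c(U)=1$. Making this work uniformly when $W$ contains possibly infinitely many critical values, and reconciling it with the CIFC topology of $U$, is the delicate step.
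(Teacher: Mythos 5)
Part (i) of your proposal is correct. The subcase $|U\cap O_f|=2$ is handled exactly as in the paper (Theorem~D applied to the tongue $U$ over $U\setminus O_f$), and your exclusion of $|U\cap O_f|=0$ is a genuinely different route: the paper continues a branch of $f^{-1}$ along a loop around one omitted value via the Gross Star Theorem and contradicts Remark~3.2(iii), whereas you exploit the factorization $f=(S\circ\tanh)\circ g$ with $S\circ\tanh:\mathbb{C}\to\widehat{\mathbb{C}}\setminus O_f$ a universal covering, so that complete invariance forces the nonconstant entire function $g$ to omit all but one sheet over $U$ --- an open set --- contradicting Picard. That argument is clean and, to my reading, sound. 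Your converse direction (SCH via Theorems~\ref{sc1} and \ref{twofatoucomponent} plus the fact that a completely invariant component is not a Herman ring) is also correct; the paper routes the same ingredients through Corollary~\ref{bothomittedvaluesinU} instead.

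Part (ii), however, has a genuine gap at its core. The lift through $\pi(w)=c'+e^{w}$ buys nothing: since $W$ is simply connected and misses $c'$, each component of $\pi^{-1}(W)$ maps biholomorphically onto $W$, so $\widetilde{h|_U}$ is just $h|_U$ post-composed with a conformal map, and your contradiction rests entirely on the claim that an unbranched holomorphic surjection between simply connected domains is a covering, hence an automorphism. That claim is false: local injectivity plus surjectivity does not imply the covering property in the absence of properness (for instance, $\exp$ maps the rectangle $\{0<\operatorname{Re}z<\log 2,\ 0<\operatorname{Im}z<3\pi\}$ onto the simply connected slit annulus, unbranched, surjective, and two-to-one over part of the image). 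Here $h|_U$ is certainly not proper (it is infinite-to-one over a connected simply connected base), and the covering property fails precisely when $W$ contains a finite asymptotic value of $h$ --- which cannot be excluded and is in fact typical for an invariant Fatou component. So even your ``no critical points in $U$'' case does not close, the branched case is left open by your own admission (and iterating Theorem~D over possibly infinitely many critical values in $W$ is not carried out), and the whole construction presupposes $h$ entire, i.e.\ $f\in M_o^1$, whereas part (ii) must also cover $f\in M_o^2$ (this is what feeds Theorem~\ref{cifc}(ii)). The paper avoids all of this: it continues a branch of $f^{-1}$ by the Gross Star Theorem along a Jordan curve winding once around the stray critical value $c$ and around no omitted value, closes the preimage arc inside $U$, and compares winding numbers of the image of the resulting boundary, falling back on the part (i) argument when a pole lies inside; no covering-space structure of $h$ is needed.
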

\begin{proof}
 \begin{enumerate}
 \item
 Let $f \in M_o^{2}$ and $U \in CIFC_f$.
  Suppose that $c(U)=1$. First, we shall prove  $|O_f \bigcap U|>0$.
If this is not true, then $O_f =\{a,~b\} \subset U^c$.
  Take a point $ u \in U$ (which is neither a critical value nor an omitted value of $f$)
   in a neighborhood of which
 $f^{-1}$ has a well-defined analytic
 branch. Let this branch be $\phi$.  By the Gross Star Theorem~{\cite[Proposition\,1]{berg-direct}}, $\phi$
 can be continued analytically along
  a Jordan curve $\gamma_1$ passing through
 $u$ and winding around $a$ exactly once but not winding around $b$.
 The curve $\gamma$ defined by $
 \phi(\gamma_1)$ has its end points $u_1$ and $u_2$ in $U$, which can be joined by
 a simple curve $\beta$ entirely contained in $U$. This is possible since $U$ is
 completely invariant and path connected. Setting $A=B(\gamma \bigcup \beta)$, the bounded component of
   $(\gamma \bigcup \beta)^c$,
we observe that $\partial f(A) \subseteq \gamma_1
 \bigcup f(\beta)$. Further, $f(\beta) \subset U$ and $c(U)=1$ give
 that $f(\beta)$ winds  around neither   $a$ nor $b$.  This means $\gamma_1
 \bigcup f(\beta)$ winds around $a$ but not around $b$.
    Now, if $f$ is analytic in $A$ then $A \setminus U$ is mapped
    into $B(\gamma_1)\setminus U$ and, consequently, $a$  is
 in the closure of $f(A)$. This is not possible by
 Remark~\ref{allmultiplyconnected} (iii).
  Supposing $f$ has a pole in $A$, we observe that $b$ is
  in the closure of $f(A)$ which is also not possible because of
Remark~\ref{allmultiplyconnected} (iii). Thus, we conclude that
$|O_f \bigcap U|>0$. Note that $U$ is a tongue over $U \setminus
O_f$. If $|O_f \bigcap U|=2$ then $c( U \setminus O_f)=3$ and we
have $c(U)=\infty$ by Theorem~D(ii). This contradicts our initial
assumption that $c(U)=1$. Thus, $|O_f \bigcap U|=1$.
\par
Conversely, let $|O_f \bigcap U|=1$. If $c(U)>1$ then $O_f \subset
U$ by Corollary~\ref{bothomittedvaluesinU}, which means that $|O_f
\bigcap U|=2$ and our assumption is contradicted. Therefore, we
conclude that $c(U)=1$.

\item
Let $f \in M_o \bigcap M_{cv}$ and  let $U \in CIFC_f$ be
 such that $c(U)=1$. Suppose that $CV_f \subset U$ is not true.
 Then a critical value $c$ can be found in $ U^c$. As in (i), take $u \in U$
 such that it is neither a critical value nor an omitted value, an analytic branch
 $\phi$ of $f^{-1}$ defined in a neighborhood of $u$, a Jordan curve
 $\gamma_1$ passing through $u$ and winding around $c$ once but not around any omitted value such that
  $\phi$ can be continued analytically
 along $\gamma_1$. If $f$ is analytic in $A=B(\gamma \bigcup \beta)$,
 then $\gamma_1 \bigcup f(\beta)$ winds around the critical value
 $c$ at least twice.
 Since $c(U)=1$ and $f(\beta) \subset U$, $\gamma_1$ winds around $c$ at least
 twice. This is not true. Suppose $f$ has a pole in $A$. If $O_f \subset U$ does not
 hold, then by
 repeating the arguments of the proof of  (i) of this Lemma, a
 contradiction can be obtained. Thus, we conclude that $O_f \subset U$.
  If $f \in M_o^2 \bigcap M_{cv}$, $U \in CIFC_f$ and $c(U)=1$, then $|U \bigcap
O_f|=1$ by Lemma~\ref{lemma-completelyinvariant}(i). As $f \in
M_o^2$, $O_f \subset U$ is not possible. Therefore, $CV_f \subset U$.
\end{enumerate}
\end{proof}

\begin{proof}[Proof of Theorem~\ref{cifc}]
 \begin{enumerate}
 \item Suppose $f \in M_o^2$ and $|CIFC_f|>2$.
 Then each of the completely invariant components is simply connected.
 Further, each of them contains exactly one omitted value
 by Lemma~\ref{lemma-completelyinvariant}(i). This is not possible
 since $|O_f|=2< |CIFC_f|$ and we conclude that  $|CIFC_f| \leq 2$.
\item
Suppose $f \in M_o^{2} \bigcap M_{cv}$ and $|CIFC_f|>1$. Then each of
the completely invariant components is simply
 connected. Further, each of them contains either $CV_f$ or $O_f$ by
 Lemma~\ref{lemma-completelyinvariant} (ii).
 By Lemma~\ref{lemma-completelyinvariant}(i), $O_f$ cannot be
 contained in a single completely invariant Fatou component, which
 implies that each of these components must contain $CV_f$. This is
 not possible because $|CIFC_f| > 1$. Thus, $|CIFC_f| \leq 1$.
 \item
 Suppose $f \in M_o^{1} \bigcap M_{cv}$ and $|CIFC_f|>2$.
  Then each of the completely invariant components is simply
 connected. Lemma~\ref{lemma-completelyinvariant}(ii) gives that
 each of these components either contain $CV_f$
 or $O_f$. This cannot be true if $|CIFC_f| > 2$ and we conclude that $|CIFC_f| \leq
 2$.
\end{enumerate}
\end{proof}

\begin{proof}[Proof of Theorem~\ref{hermanring-period}]

Suppose $H$ is a $p$-periodic Herman ring and $\gamma$ is an
$f^p$-invariant curve in $H$. Let $B(H)$ denote the bounded
component of $H^c$. An essential singularity of $f^p$ cannot be in
$H$ and $B(H)=B(\gamma) \setminus H$. Therefore, it is sufficient to
prove that $B(\gamma)$ contains an essential singularity of $f^p$.
If $f^p$ is analytic on $B(\gamma)$ then $f^p(B(\gamma))=B(\gamma)$
and $f^p(B(\gamma_j))=B(\gamma_j) \subset H_j$ for
$j=1,~2,~3,...,~p-1$ where $\gamma_j=f^j(\gamma)$. This implies that
$\bigcup_{k=0}^{\infty}f^k(B(\gamma))$ is bounded, which is not
possible since $B(\gamma) $ intersects the Julia set. Therefore,
$f^p$ has at least a singularity in $B(\gamma)$. Now suppose that
all these singularities are poles. Then $f^p(B(\gamma))$ is an
unbounded domain with its boundary contained in $f^p(\gamma) =
\gamma$. Since there are $f^p$-invariant curves in $H \bigcap
B(\gamma)$, $f^p(B(\gamma))$ intersects $B(\gamma)$ and $\partial
f^p(B(\gamma))$ is properly contained in $f^p(\gamma)=\gamma$.
Consequently, the closure of $f^p(B(\gamma))=f(f^{p-1}(B(\gamma)))$
contains an omitted value of $f$. This is not possible by
Remark~\ref{allmultiplyconnected} (iii) because $f^{p-1}(B(\gamma))$
is bounded. Thus, $f^p$ has an essential singularity in $B(H)$.
\par
For $p=1$, $\infty$ is the only singularity of $f^p$ and for each
invariant Herman ring $H$, $\infty \in B(H)$. This is evidently not
possible and we conclude that $\mathcal{F}(f)$ has no invariant
Herman ring.
\par
Suppose $f$ has a single pole $w_0$ and it has a cycle of Herman
rings $\{H_0,~H_1\}$ of period $2$. Since $w_0$ is the only finite
essential singularity of $f^2$, we have $w_0 \in B(H_i)$ for
$i=0,~1$. That means the Herman rings are nested. Let $B(H_1)
\subset B(H_0)$. Take an $f^2$-invariant Jordan curve $\gamma$ in
$H_0$ and set $\gamma_1=f(\gamma)$. The set $A$ defined by
$B(\gamma) \setminus B(\gamma_1)$ does not contain the pole $w_0$
and $\partial A =\gamma \bigcup \gamma_1$ is preserved under $f$.
Thus, $f(A) =A$ and $f^n(A) =A$ for all $n$. This negates the fact
that $A \bigcap \mathcal{J}(f) \neq \emptyset$. Therefore, Herman
rings of period two do not exist for $f \in M_o$ if $f$ has only one
pole.

\end{proof}
A proof of Theorem~C using Theorem~D is given for the sake of
completeness.
\begin{proof}[Proof of Theorem~C]
Suppose that (i) is not true. Then $c(W_m) $ is finite for some $m$.
If there is a $k$ with $1 \leq c(W_k) \leq 2$, then noting that $
W_k$ is either an island or a tongue over $W_{k+1}$, it follows from
 Theorem~D that $1 \leq c(W_{k+1}) \leq 2$. Consequently,
  $1 \leq c(W_n) \leq 2$ for all $n \geq k$ and conclusion (ii) holds.
   On the other hand, if  $1 \leq c(W_k) \leq 2$ does not hold for any $k$ then in view of Theorem~D,
    we get that $f~:W_n \to  W_{n+1}$ is proper for all $n \geq m$ and by Theorem~D(i),
      $c(W_n) \geq c(W_{n+1}) \geq c(W_{n+2})...\geq 3$. Therefore, $c(W_n)=p \geq 3$
       for all large $n$ and $p$ is independent of $n$. Now, it is easy to see from Theorem ~D (i)
        that $f~:~W_n \to   W_{n+1}$ is univalent for all large $n$.
\end{proof}

\begin{proof}[Proof of Theorem~\ref{eventualconnectivity-tk}]
\begin{enumerate}
\item
If $c(W_n)=1$ for all $n \in \mathbb{N}$, then $ec(W)=1$. Suppose
$c(W_n)>1$ for some $n$. Then, $c(W_n)>1$ for all large $n$ by
Remark~\ref{remark-lemma-general3}(i). It follows from Theorem~C
that $ec(W)$ exists and is $\infty$, $2$ or $p>2$.
\item
It is clear from
Theorems~\ref{sc1},~\ref{twofatoucomponent},~\ref{onefatoucomponent},~\ref{singleton-notburied}
and ~\ref{singleton-buried}  that in each situation different from
that assumed in (i) above, each multiply connected Fatou component is
 either pre-periodic or  a wandering
domain $V$ such that $c(V_n)=1$ for all $n$. Therefore, $ec(W)=1$.

\end{enumerate}

\end{proof}

\end{document}